\title[On a Spectral Sequence for the Cohomology of Infinite Loop
Spaces]{On a Spectral Sequence for the Cohomology of Infinite Loop
  Spaces}
\author[R Haugseng]{Rune Haugseng}
\address{Department of Mathematical Sciences\\
University of Copenhagen\\
Universitetsparken 5\\
2100 København Ø\\
Denmark}
\email{haugseng@math.ku.dk}
\urladdr{http://sites.google.com/site/runehaugseng}
\author[H Miller]{Haynes Miller}
\address{Department of Mathematics\\
Massachusetts Institute of Technology\\
77 Massachusetts Avenue\\
Cambridge, MA 02139\\
USA}
\email{hrm@math.mit.edu}
\urladdr{http://math.mit.edu/~hrm}
\newtheorem{thm}{Theorem}[section]
\newtheorem{lemma}[thm]{Lemma}
\newtheorem{propn}[thm]{Proposition}
\newtheorem{cor}[thm]{Corollary}
\theoremstyle{definition}
\newtheorem{defn}[thm]{Definition}
\newtheorem{ex}[thm]{Example}
\newtheorem{remark}[thm]{Remark}
\newcommand{\blank}{\text{--}}
\newcommand{\defterm}[1]{\emph{#1}}
\newcommand{\isoto}{\xrightarrow{\sim}}
\newcommand{\IFF}{if and only if}
\newcommand{\catname}[1]{\ensuremath{\text{\textup{#1}}}}
\newcommand{\txt}[1]{\ensuremath{\text{\textup{#1}}}}
\newcommand{\Mod}{\catname{Mod}}
\newcommand{\Fun}{\txt{Fun}}
\newcommand{\xto}[1]{\xrightarrow{#1}}
\newcommand{\from}{\leftarrow}
\newcommand{\ltikzcd}[1]{{\setlength\mathsurround{0pt} \begin{tikzcd}#1\end{tikzcd}}}
\newcommand{\csquare}[8]{ %
\[ %
\begin{tikzpicture} %
\matrix (m) [matrix of math nodes,row sep=3em,column sep=2.5em,text height=1.5ex,text depth=0.25ex] %
{ #1 \pgfmatrixnextcell #2 \\ %
  #3 \pgfmatrixnextcell #4 \\ }; %
\path[->,font=\footnotesize] %
(m-1-1) edge node[auto] {$#5$} (m-1-2)%
(m-1-1) edge node[left] {$#6$} (m-2-1)%
(m-1-2) edge node[auto] {$#7$} (m-2-2)%
(m-2-1) edge node[below] {$#8$} (m-2-2);%
\end{tikzpicture}%
\]%
}
\newcommand{\nodispcsquare}[8]{ %
\begin{tikzpicture} %
\matrix (m) [matrix of math nodes,row sep=3em,column sep=2.5em,text height=1.5ex,text depth=0.25ex] %
{ #1 \pgfmatrixnextcell #2 \\ %
  #3 \pgfmatrixnextcell #4 \\ }; %
\path[->,font=\footnotesize] %
(m-1-1) edge node[auto] {$#5$} (m-1-2)%
(m-1-1) edge node[left] {$#6$} (m-2-1)%
(m-1-2) edge node[auto] {$#7$} (m-2-2)%
(m-2-1) edge node[below] {$#8$} (m-2-2);%
\end{tikzpicture}%
}
\newcommand{\nolabelcsquare}[4]{\csquare{#1}{#2}{#3}{#4}{}{}{}{}}
\newcommand{\ctriangle}[6]{ %
\[ %
\begin{tikzpicture} %
\matrix (m) [matrix of math nodes,row sep=3em,column sep=1.2em,text height=1.5ex,text depth=0.25ex] %
{  \pgfmatrixnextcell #1 \pgfmatrixnextcell \\ %
  #2 \pgfmatrixnextcell \pgfmatrixnextcell #3 \\ }; %
\path[->,font=\footnotesize] %
(m-1-2) edge node[left] {$#4$} (m-2-1)%
(m-1-2) edge node[right] {$#5$} (m-2-3)%
(m-2-1) edge node[below] {$#6$} (m-2-3);%
\end{tikzpicture}%
\]%
}
\newcommand{\id}{\txt{id}}
\DeclareMathOperator{\colimP}{colim}
\newcommand{\colim}{\mathop{\colimP}}
\newcommand{\op}{\txt{op}}
\newcommand{\F}{\mathbb{F}_2}
\newcommand{\grV}{\txt{gr}\catname{Vect}}
\newcommand{\Vect}{\catname{Vect}}
\newcommand{\Restr}{\catname{Restr}}
\newcommand{\otimesL}{\otimes^{\mathbb{L}}}
\newcommand{\simp}{\boldsymbol{\Delta}}
\newcommand{\Sq}{\txt{Sq}}
\newcommand{\HH}{\mathrm{H}}
\newcommand{\Alg}{\catname{Alg}}
\newcommand{\ZZ}{\mathbb{Z}}
\begin{document}

\begin{abstract}
  We study the mod-2 cohomology spectral sequence arising from
  delooping the Bousfield-Kan cosimplicial space giving the
  2-nilpotent completion of a connective spectrum $X$.  Under good
  conditions its $E_{2}$-term is computable as certain non-abelian
  derived functors evaluated at $\HH^*(X)$ as a module over the
  Steenrod algebra, and it converges to the cohomology of
  $\Omega^\infty X$. We provide general methods for computing the
  $E_{2}$-term, including the construction of a multiplicative
  spectral sequence of Serre type for cofibration sequences of
  simplicial commutative algebras. Some simple examples are also
  considered; in particular, we show that the spectral sequence
  collapses at $E_{2}$ when $X$ is a suspension spectrum.
\end{abstract}

\begin{asciiabstract}
  We study the mod-2 cohomology spectral sequence arising from
  delooping the Bousfield-Kan cosimplicial space giving the
  2-nilpotent completion of a connective spectrum X.  Under good
  conditions its E_2-term is computable as certain non-abelian
  derived functors evaluated at H^*(X) as a module over the
  Steenrod algebra, and it converges to the cohomology of
  Omega^\infty X. We provide general methods for computing the
  E_2-term, including the construction of a multiplicative
  spectral sequence of Serre type for cofibration sequences of
  simplicial commutative algebras. Some simple examples are also
  considered; in particular, we show that the spectral sequence
  collapses at E_2 when X is a suspension spectrum.
\end{asciiabstract}

\maketitle

%\tableofcontents

\section{Introduction}
This paper explores the relationship between the $\F$-cohomology
$\HH^{*}E = \HH^{*}(E; \F)$ of a connective spectrum $E$ and that of
its associated infinite loop space $\Omega^{\infty}E$.

The starting point is the stabilization map $\HH^*(E) \to
\HH^*(\Omega^{\infty}E)$, induced by the adjunction counit
$\Sigma^{\infty}\Omega^{\infty}E \to E$. This factors
through the maximal unstable quotient $D\HH^{*}(E)$ of the $A$-module
$\HH^{*}(E)$ (where $A$ is the Steenrod algebra), and this map then
extends over the free unstable algebra $UD\HH^*(E)$. This construction
provides the best approximation to $\HH^{*}(\Omega^{\infty}E)$ functorial
in the $A$-module $\HH^{*}(E)$. 

We study a spectral sequence that
converges (for $E$ connected and of finite type) to
$\HH^*(\Omega^{\infty}E)$ and has $E_{2}$-term given by the non-abelian
derived functors of $UD$ applied to $\HH^{*}(X)$. This is the cohomology
spectral sequence associated to the cosimplicial space obtained by
applying $\Omega^{\infty}$ to a cosimplicial Adams (or Bousfield-Kan)
resolution of the spectrum $E$.

This construction is analogous and in a sense dual to that
of \cite{MillerDeloop}, where the second author constructed a spectral
sequence that converges to $\HH_{*}(E)$ by forming a simplicial
resolution of $E$ by suspension spectra and applying the zero-space
functor $\Omega^\infty$. The best approximation to the homology of $E$
functorial in the homology of the infinite loop space $\Omega^{\infty}
E$ is given by the indecomposables of $\HH_{*}(\Omega^\infty E)$ with
respect to the Dyer-Lashof operations and products, which are
annihilated by the natural map $\HH_{*}(\Omega^{\infty}E) \to
\HH_{*}(E)$, and the $E^{2}$-term of the spectral sequence is given by
the non-abelian left derived functors of these indecomposables applied
to $\HH_{*}(\Omega^{\infty}E)$.

The spectral sequence we study here is hardly new, and has been
previously considered (in unpublished work) by Bill Dwyer, Paul
Goerss, and no doubt others. Our main contribution here is related to
the computation of the $E_{2}$-term, which is of the form
$\pi_{*}(UV_{\bullet})$, where $V_{\bullet}$ is a simplicial unstable
$A$-module. We show that this is determined by a natural
short exact sequence of graded unstable modules over the Steenrod
algebra, in which the end terms are explicitly given in terms of the
graded $A$-module $\pi_{*}(V_{\bullet})$. This yields an explicit but
mildly non-functorial description of the $E_{2}$-term.

This reduces the analysis of the $E_{2}$-term of the spectral
sequence to the computation of the derived functors $\mathbb{L}_{*}D$.
These derived functors of destabilization have been studied by many authors,
including Singer~\cite{SingerLoops2,SingerNewChCx}, Lannes and
Zarati~\cite{LannesZaratiDestab}, Goerss~\cite{GoerssUnstProj}, Kuhn
and McCarty~\cite{KuhnMcCarty}, and Powell~\cite{Powell}.

As an outcome of our computation, we find that the spectral sequence
must collapse when $X$ is a connected suspension spectrum,
$X=\Sigma^\infty B$ for $B$ a connected space.  While the spectral
sequence collapses by construction when $X$ is a mod-2
Eilenberg-Mac Lane spectrum, its collapse for suspension spectra is a
bit of a surprise.  This does not yet constitute an independent
calculation of the cohomology of $\Omega^\infty\Sigma^\infty B$,
however, since to prove that the spectral sequence collapses we simply
compare the size of the $E_2$ term with that of the known homology of
$\Omega^\infty\Sigma^\infty B$. It is possible that the collapse
follows from Dwyer's description \cite{DwyerDivSqSpSeq} of the
behavior of differentials in a spectral sequence of this
type.

It would be interesting to compare the spectral sequence we study to
that arising from the Goodwillie-Taylor tower of the functor
$\Sigma^\infty\Omega^\infty$, as studied by Kuhn and McCarty
\cite{KuhnMcCarty}. Those authors also relate their spectral sequence
to derived functors of destabilization, though in a less direct way
than they occur in our spectral sequence; we would like to better
understand the relationship between these constructions, which seems
analogous to the relationship between the Bousfield-Kan unstable Adams
spectral sequence and the spectral sequence arising from the lower
central series.

\subsection{Overview}
We construct the spectral sequence in \S\ref{sec:spseq}, and review
some background material on simplicial commutative $\F$-algebras in
\S\ref{sec:sgcas}. Then in \S\ref{sec:derU} we compute $\pi_{*}U(M)$
in terms of $\pi_{*}M$, where $M$ is any simplicial $A$-module. We end
by discussing some simple examples of the spectral sequence in
\S\ref{sec:Ex}.

\subsection{Acknowledgements}\ \newline \emph{RGH:} I thank Vigleik
Angeltveit, Paul Goerss, and Justin Noel for helpful conversations
about this project. I also thank the American-Scandinavian Foundation
and the Norway-America Association for partially supporting
me during the time much of this work was carried out. \\
\emph{HRM:} I am grateful to the members of the Centre for Symmetry
and Deformation at the University of Copenhagen for their hospitality
in May 2011, when the initial stages of this work were carried
out. The project was jump-started by conversations with Nick Kuhn at a
BIRS workshop, and benefited from guidance from Bill Dwyer along the
way. This research was carried out in part under NSF grant 0905950.

\section{Definition and Convergence of the Spectral
  Sequence}\label{sec:spseq}

In this section we define the spectral sequence we are interested in,
observe that its $E_{2}$-term is described by certain derived
functors, and show that it converges under suitable finiteness and
connectivity assumptions. More precisely, our goal
is to prove the following:
\begin{thm}\label{thm:Conv}
  Suppose $X$ is a connected spectrum of finite type, i.e. $\pi_{*}X$ is
  $0$ for $* \leq 0$ and is a finitely generated abelian group for $*
  > 0$. Then there is a
  convergent spectral sequence
  \[ E^{s,t}_{2} = \mathbb{L}_{-s}(UD)(\HH^{*}X)^{t} \Rightarrow
  \HH^{t+s}(\Omega^{\infty}X).\]
\end{thm}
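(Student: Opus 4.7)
The plan is simply to assemble the ingredients already established in this section. The cosimplicial spectrum $P^{\bullet} = X \wedge \HH\F^{\wedge \bullet+1}$ produces the spectral sequence
\[E_{2}^{s,t} = \pi_{-s} \HH^{t}(\Omega^{\infty} P^{\bullet}) \Rightarrow \HH^{t+s}(\Omega^{\infty} X^{\wedge}_{2}),\]
which, under the stated finiteness hypothesis, converges by the cited dual of Bousfield's result. The preceding lemma identifies the abutment with $\HH^{t+s}(\Omega^{\infty} X)$, so the only remaining task is to identify the $E_{2}$-term.

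The key step is to verify that Serre's theorem applies levelwise to $P^{\bullet}$. Since $\pi_{*}X$ is degreewise finitely generated (so $\HH^{*}X$ is of finite type) and the Steenrod algebra $A$ is finite-dimensional in each degree, the Künneth isomorphism
\[\HH^{*}P^{n} \cong \HH^{*}X \otimes A^{\otimes n+1}\]
shows that $\HH^{*}P^{n}$ is of finite type. Moreover, $P^{n}$ is a module spectrum over $\HH\F$ and therefore splits as a wedge of shifted copies of $\HH\F$, making it a generalized Eilenberg--MacLane spectrum of finite type. Serre's theorem then yields natural isomorphisms $\HH^{*}(\Omega^{\infty} P^{n}) \cong UD(\HH^{*} P^{n})$, which, by naturality in the cosimplicial variable, assemble into a levelwise isomorphism of simplicial graded $\F$-vector spaces $\HH^{*}(\Omega^{\infty} P^{\bullet}) \cong UD(\HH^{*} P^{\bullet})$.

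To finish, I would observe that $\HH^{*}P^{\bullet} \cong \HH^{*}X \otimes A^{\otimes \bullet+1}$ is precisely the standard cobar-type simplicial free resolution of $\HH^{*}X$ in $\Mod{A}$. Applying the left adjoint $UD$ levelwise and taking homotopy therefore computes, by definition of nonabelian derived functors, $\mathbb{L}_{-s}(UD)(\HH^{*}X)$, giving the claimed identification of the $E_{2}$-page. No serious obstacle arises here: the argument is a direct synthesis of Serre's theorem, the Künneth formula, the preceding lemma, and Bousfield's convergence result. The only mild subtleties are the finite-type verification needed to invoke Serre's theorem on each $P^{n}$ and the recognition of $\HH^{*}P^{\bullet}$ as a simplicial free $A$-module resolution of $\HH^{*}X$.
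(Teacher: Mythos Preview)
Your proposal is correct and follows essentially the same argument as the paper: the theorem is stated immediately after the paragraph that assembles Bousfield's convergence result, the preceding lemma identifying the abutment, Serre's computation applied levelwise, and the recognition of $\HH^{*}P^{\bullet}$ as a free simplicial resolution of $\HH^{*}X$. You have in fact been slightly more careful than the paper in spelling out the finite-type verification needed to invoke Serre's theorem on each $P^{n}$.
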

Here $\mathbb{L}_{*}(UD)$ denotes the non-abelian derived functors of
$UD$, which can be defined as $\pi_{*}UD(M_{\bullet})$ where
$M_{\bullet}$ is the simplicial free resolution of the $A$-modules
$\HH^{*}X$.

To define the spectral sequence, recall that for any spectrum $X$ the
Eilenberg-Mac Lane ring spectrum $\HH\F$ gives a cosimplicial spectrum
\[P^{\bullet} := X \wedge \HH\F^{\wedge (\bullet+1)}.\] The homotopy
limit of $P^{\bullet}$ is the \emph{2-nilpotent completion}
$X^{\wedge}_{2}$ of $X$. Since the functor $\Omega^{\infty}$ preserves
homotopy limits, the cosimplicial space $\Omega^{\infty}P^{\bullet}$
has homotopy limit $\Omega^{\infty}(X^{\wedge}_{2})$. This gives a
spectral sequence in cohomology,
\[ E^{s,t}_{2} = \pi_{-s}\HH^{t}(\Omega^{\infty}P^{\bullet})
\Rightarrow \HH^{t+s}(\Omega^{\infty}(X^{\wedge}_{2})).\]

\begin{propn}[Bousfield]
  If $X$ is a connected spectrum of finite type (i.e. all its homotopy
  groups are finitely generated), then this spectral sequence
  converges.
\end{propn}
\begin{proof}
  This follows from (the dual of) the convergence result of
  \cite[\S4.5]{BousfieldHlgySpSeq}.
\end{proof}

\begin{lemma}
  Suppose $X$ is a connected spectrum of finite type. Then the map
  $\Omega^{\infty}X \to \Omega^{\infty}(X^{\wedge}_{2})$ exhibits
  $\Omega^{\infty}(X^{\wedge}_{2})$ as the $\HH\F$-localization of
  $\Omega^{\infty}X$. In particular, it induces an equivalence in
  $\HH\F$-cohomology.
\end{lemma}
\begin{proof}
  If $X$ is connected, then by \cite[Theorem 6.6]{BousfieldLocSpa}
  the 2-nilpotent completion $X^{\wedge}_{2}$ is equivalent to the
  $\HH\F$-localization of $X$; in particular the natural map $X \to
  X^{\wedge}_{2}$ induces an equivalence in $\HH\F$-cohomology.

  Moreover, under the stated assumptions on $X$ the map $X \to
  X^{\wedge}_{2}$ induces an isomorphism $(\pi_{*}X) \otimes
  \mathbb{Z}^{\wedge}_{2} \isoto \pi_{*}X^{\wedge}_{2}$, by
  \cite[Proposition 2.5]{BousfieldLocSpa}. Since $X$ is connected, the
  space $\Omega^{\infty}X$ is nilpotent, and so by \cite[Example
  VI.5.2]{BousfieldKanCompl} the map $\Omega^{\infty}X \to
  (\Omega^{\infty}X)^{\wedge}_{2}$ also induces an isomorphism
  $(\pi_{*}\Omega^{\infty}X) \otimes \mathbb{Z}^{\wedge}_{2} \isoto
  \pi_{*}(\Omega^{\infty}X)^{\wedge}_{2}$. Since
  $\Omega^{\infty}(X^{\wedge}_{2})$ is 2-complete, the map
  $\Omega^{\infty}X \to \Omega^{\infty}(X^{\wedge}_{2})$ factors
  through $(\Omega^{\infty}X)^{\wedge}_{2}$; in the resulting
  commutative diagram \ctriangle{\pi_{*}(\Omega^{\infty}X) \otimes
    \mathbb{Z}^{\wedge}_{2}}{\pi_{*}(\Omega^{\infty}X)^{\wedge}_{2}}{\pi_{*}(\Omega^{\infty}(X^{\wedge}_{2}))}{}{}{}
  we know that two of the maps are isomorphisms, hence the map
  $(\Omega^{\infty}X)^{\wedge}_{2} \to
  \Omega^{\infty}(X^{\wedge}_{2})$ is a weak equivalence. The result
  follows since under our assumptions the map $\Omega^{\infty}X \to
  (\Omega^{\infty}X)^{\wedge}_{2}$ exhibits
  $(\Omega^{\infty}X)^{\wedge}_{2}$ as the $\HH\F$-localization of
  $\Omega^{\infty}X$ by \cite[Proposition VI.5.3]{BousfieldKanCompl}.
\end{proof}
Under these finiteness assumptions the spectral sequence
thus converges to the mod-2 cohomology of $\Omega^{\infty}X$. To
describe the $E_{2}$-term more algebraically, we appeal to Serre's
computation of the cohomology of Eilenberg-Mac Lane spaces. To state
this we must first recall some definitions:
\begin{defn}
  Let $\Mod_{A}$ be the category of (graded) $A$-modules, and let
  $\mathcal{U}$ be the full subcategory of \emph{unstable} modules,
  i.e. $A$-modules $M$ such that if $x \in M_{n}$ then $\Sq^{i}x = 0$
  for $i > n$. We define $D \colon \Mod_{A} \to \mathcal{U}$ to be the \emph{destabilization}
  functor, which sends an $A$-module $M$ to its quotient by the submodule generated
  by $\Sq^{i}x$ where $x \in M_{n}$ and $i > n$; the functor $D$ is left adjoint to the
  inclusion $\mathcal{U} \hookrightarrow \Mod_{A}$.
\end{defn}

\begin{defn}
  Let $\mathcal{K}$ be the category of unstable algebras over the
  Steenrod algebra $A$, i.e. augmented commutative $A$-algebras $R$
  that are unstable as $A$-modules, with $x^{2} = \Sq^{n}x$ for all $x
  \in R_{n}$. We define $U \colon \mathcal{U} \to \mathcal{K}$ to be
  the \emph{free unstable algebra} functor, which sends $M \in
  \mathcal{U}$ to \[S(M) / (x^{2} - \Sq^{|x|}x),\] where $S$ is the
  free graded symmetric algebra functor; this functor is left adjoint
  to the forgetful functor $\mathcal{K} \to \mathcal{U}$.
\end{defn}
 
\begin{thm}[Serre~\cite{SerreCohlgyEM}]\label{thm:Serre}
  If $M$ is an Eilenberg-Mac Lane spectrum of finite type, then the
  natural map $\HH^{*}(M) \to \HH^{*}(\Omega^{\infty}M)$ induces an
  isomorphism $UD(\HH^{*}M) \isoto \HH^{*}(\Omega^{\infty}M)$.
\end{thm}

For any $n > 0$ the spectrum $X \wedge \HH\F^{\wedge n}$ is a wedge of
suspensions of Eilenberg-Mac Lane spectra, so this theorem allows us
to rewrite the $E_{2}$-term of our spectral sequence as
\[E^{s,t}_{2} = \pi_{-s}UD(\HH^{*}P^{\bullet})^{t}.\] But by the
Künneth theorem $\HH^{*}(P^{n})$ is isomorphic to $\HH^{*}(X) \otimes A^{\otimes
  n+1}$, and in fact the simplicial $A$-module $\HH^{*}(P^{\bullet})$ is 
the standard cotriple
resolution of $\HH^{*}X$. The $A$-modules
$\pi_{*}UD(\HH^{*}P^{\bullet})$ can therefore be interpreted as the
(nonabelian) derived functors $\mathbb{L}_{*}(UD)$ of $UD$ evaluated
at $\HH^{*}X$. This completes the proof of Theorem~\ref{thm:Conv}.

\begin{remark}
  Our spectral sequence is of the type considered by Dwyer in
  \cite{DwyerDivSqSpSeq}, so by \cite[Proposition
  2.3]{DwyerDivSqSpSeq} it is a spectral sequence of $A$-algebras.  By
  (the dual of) results of Hackney~\cite{HackneyCosInfLoop} it is
  actually a spectral sequence of Hopf algebras.
\end{remark}

\section{Simplicial Commutative $\F$-Algebras}\label{sec:sgcas}
In this section we first review some background material on simplicial
commutative (graded) algebras: we recall the
model category structure on simplicial commutative algebras in
\S\ref{subsec:ModCat} and
in \S\ref{subsec:ops} we review the higher divided square operations
in the homotopy groups of simplicial commutative algebras. Then in
\S\ref{subsec:filtabstr} we discuss filtered algebras and modules from
an abstract point of view, and finally in \S\ref{subsec:SerreSpSeq} we
use this material to construct a ``Serre spectral sequence'' for
cofibre sequences of simplicial commutative algebras.

\subsection{Model Category Structure}\label{subsec:ModCat}
We will make use of a model category structure on simplicial augmented
commutative graded $\F$-algebras. This is an instance of a general
class of model categories constructed by
Quillen~\cite{QuillenHtpclAlg}, and is also described by
Miller~\cite{MillerSullivan}:

\begin{thm}
  There is a simplicial model category structure on the category of
  simplicial augmented graded commutative $\F$-algebras where a
  morphism is a weak equivalence or fibration if the underlying map of
  simplicial sets is a weak equivalence or Kan fibration.
\end{thm}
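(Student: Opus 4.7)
The plan is to realize this as a transferred (``right-induced'') model structure along the free/forgetful adjunction
\[ F : \sSet \rightleftarrows \txt{sAlg} : U, \]
where $\txt{sAlg}$ denotes simplicial augmented graded commutative $\F$-algebras and $F$ is the free functor (so the free augmented graded commutative $\F$-algebra on a set is the tensor product of a polynomial algebra on the even-degree generators with an exterior algebra on the odd-degree ones, with the obvious augmentation). We define fibrations and weak equivalences in $\txt{sAlg}$ to be those maps whose image under $U$ is a Kan fibration, resp.\ a weak equivalence of simplicial sets, and we define cofibrations as the maps with the left lifting property against the trivial fibrations. The task is then to verify Quillen's model category axioms.

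The first step is to note that $\txt{sAlg}$ is complete and cocomplete: limits are computed levelwise in graded $\F$-vector spaces, while colimits exist because $\txt{sAlg}$ is monadic over the simplicial graded $\F$-vector spaces (indeed over $\sSet$), and the relevant monad preserves reflexive coequalizers. Combined with the fact that the free functor $F$ sends finite simplicial sets to compact objects of $\txt{sAlg}$, this lets us run the small object argument with respect to the two standard sets of generating (trivial) cofibrations $F(\partial \Delta^{n} \hookrightarrow \Delta^{n})$ and $F(\Lambda^{n}_{k} \hookrightarrow \Delta^{n})$, producing the needed functorial factorizations and handling the $2$-out-of-$3$ and retract axioms on the nose. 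The simplicial enrichment is transferred from $\sSet$ in the usual way (tensor by a simplicial set $K$ is defined via the monadic bar construction using $F(K)$, and cotensor is levelwise mapping), and SM7 reduces to the corresponding statement in $\sSet$ after applying the adjunction.

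The main obstacle is the one lifting axiom that cannot be checked by pure adjunction: every trivial cofibration must have the left lifting property against every fibration, or equivalently, the image under $F$ of every trivial cofibration of simplicial sets must be a weak equivalence in $\txt{sAlg}$. This is exactly the step where one needs a genuine \emph{path object argument}. For a fibrant object $R \in \txt{sAlg}$, a path object is constructed as the cotensor $R^{\Delta^{1}}$ (i.e.\ simplicially, $[n] \mapsto R_{n}^{\Delta^{1}_{n}}$, which inherits an augmented graded commutative algebra structure levelwise); the two endpoint evaluation maps furnish a factorization of the diagonal $R \to R \times R$ as a weak equivalence followed by a fibration, which is precisely what Quillen's transfer lemma requires to deduce that $F$ preserves trivial cofibrations.

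Rather than grinding through these details, we invoke the general machinery directly: Quillen's original theorem for simplicial algebras over a set of operations and relations \cite{QuillenHtpclAlg}, together with its extension to arbitrary (multi-sorted) Lawvere theories by Rezk~\cite{RezkSimplAlg}, applies verbatim to the Lawvere theory of augmented graded commutative $\F$-algebras, yielding the asserted model structure; Miller's treatment in \cite{MillerSullivan} covers precisely the graded variant we need, so no additional argument is required.
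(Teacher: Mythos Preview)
Your proposal is essentially correct and matches the paper's treatment: the paper does not give a proof either, but simply records this as an instance of Quillen's general construction \cite{QuillenHtpclAlg}, with Miller \cite{MillerSullivan} and Rezk \cite{RezkSimplAlg} cited for the specific and general versions respectively---exactly the references you invoke in your final paragraph. Your preceding sketch of the transfer argument is extra detail the paper omits.

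One small slip worth fixing: since $\F = \F_{2}$, graded commutativity imposes no sign condition, so the free augmented graded commutative $\F$-algebra on a graded set is a polynomial algebra in every degree; there is no exterior piece on odd-degree generators. This does not affect your argument, since you ultimately defer to the cited references anyway, but the description of $F$ as written is for odd characteristic.
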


\begin{remark}
  For us \emph{graded} will mean $\mathbb{N}$-graded rather than
  $\mathbb{Z}$-graded. To avoid confusion, let us also mention that we
  de not require that a graded $\F$-algebra $A$ has $A_{0} = \F$, as
  is sometimes assumed in the literature.
\end{remark}

\begin{remark}
  Since a simplicial graded commutative $\F$-algebra is a simplicial
  group, a morphism $f \colon A \to B$ is a fibration \IFF{} the
  induced map $A \to B \times_{\pi_{0}B} \pi_{0}A$ is surjective. In
  particular, every object is fibrant.
\end{remark}

\begin{thm}[Rezk]\label{propn:ProperModCat}
  This model structure on simplicial augmented graded commutative
  $\F$-algebras is proper.
\end{thm}
\begin{proof}
  This follows from the properness criterion of \cite[Theorem
  9.1]{RezkSimplAlg}, since polynomial algebras are flat and thus
  tensoring with them preserves weak equivalences.
\end{proof}

We now recall Miller's description of the cofibrations in this model
category:%; for another description, see \cite[Theorem
%7.1]{RezkSimplAlg}.
\begin{defn}\label{defn:almost}
  A morphism $f \colon A \to B$ of simplicial augmented commutative
  $\F$-algebras is \defterm{almost-free} if for every $n \ge 0$ there
  is a subspace $V_{n}$ of the augmentation ideal $IB_{n}$ and
  maps \[\delta_{i} \colon V_{n} \to V_{n-1}, \quad 1 \leq i \leq
  n, \]
  \[\sigma_{i} \colon V_{n} \to V_{n+1}, \quad 0 \leq i \leq n,\]
  so that the induced map $A_{n} \otimes S(V_{n}) \to B_{n}$ is an
  isomorphism for all $n$ and the following diagrams commute:
  \[\nodispcsquare{A_n \otimes S(V_n)}{B_n}{A_{n-1} \otimes
    S(V_{n-1})}{B_{n-1},}{}{d_i \otimes S(\delta_{i})}{d_i}{} \qquad
  \nodispcsquare{A_n \otimes S(V_n)}{B_n}{A_{n+1} \otimes
    S(V_{n+1})}{B_{n+1}.}{}{s_i \otimes S(\sigma_{i})}{s_i}{}\]
  In other words, all the face and degeneracy maps \emph{except} $d_{0}$ are
  induced from maps between the $V_{n}$'s.
\end{defn}

\begin{remark}
  The definition of almost free morphisms in \cite{MillerSullivan} is
  wrong and was corrected in \cite{MillerSullivanCorr}.
\end{remark}

\begin{thm}[Miller, {\cite[Corollary 3.5]{MillerSullivan}}]
  A morphism of simplicial augmented commutative $\F$-algebras is a
  cofibration \IFF{} it is a retract of an almost-free morphism.
\end{thm}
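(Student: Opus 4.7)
The plan is to prove this via two implications: first, every almost-free morphism is a cofibration; second, every cofibration is a retract of an almost-free morphism. Both directions rely on the fact that the simplicial model structure of Quillen is cofibrantly generated by maps of the form $F(\partial\Delta^n) \hookrightarrow F(\Delta^n)$ (where $F$ is the free simplicial augmented commutative graded $\F$-algebra functor), suitably shifted in internal degree.

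For the first direction, given an almost-free morphism $f \colon A \to B$ with data $(V_n, \delta_i, \sigma_i)$, I would define a skeletal filtration $A = B^{(-1)} \subseteq B^{(0)} \subseteq \cdots$ by letting $B^{(k)}$ be the simplicial sub-algebra of $B$ generated by $A$ together with the images of $V_n$ for $n \le k$ (and all their iterated degeneracies determined by the $\sigma_i$). The maps $\delta_i$ for $i \ne 0$ provide the partial simplicial structure needed to make this well-defined, while $d_0$ produces boundary data landing in $B^{(k-1)}$. Each inclusion $B^{(k-1)} \hookrightarrow B^{(k)}$ can then be identified with a pushout of the generating cofibration $F(V_k \otimes \partial\Delta^k) \to F(V_k \otimes \Delta^k)$ along the boundary-attaching map determined by $d_0$. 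Since pushouts and transfinite compositions of cofibrations are cofibrations, $f$ is a cofibration.

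For the converse, I would apply the small object argument to factor any cofibration $f \colon A \to B$ as $A \xrightarrow{i} \tilde{B} \xrightarrow{p} B$ where $p$ is a trivial fibration and $i$ is built as a transfinite composition of pushouts of generating cofibrations. The key observation is that this construction is inherently almost-free: each cell attached in simplicial dimension $n$ contributes a one-dimensional generator to $V_n$, its boundary data is recorded by $d_0$, and all other face and degeneracy maps act by the tautological operations on $\partial\Delta^n$ and $\Delta^n$. Collecting these generators across all stages produces the subspaces $V_n$ with maps $\delta_i$ (for $i \ne 0$) and $\sigma_i$ satisfying the required commutativity. Since $f$ has the left lifting property against the trivial fibration $p$, the standard retract argument exhibits $f$ as a retract of $i$.

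The main obstacle is the bookkeeping in the second direction: one must verify that the generators adjoined at each transfinite stage assemble into a coherent system of graded vector spaces $V_n$, with the non-$d_0$ simplicial structure factoring through maps $\delta_i, \sigma_i$ on the generators rather than using the algebra structure. This is exactly the subtlety that the corrigendum \cite{MillerSullivanCorr} addresses — the naive definition omitting this restriction is too strong — and the reason the statement uses retracts of almost-free maps rather than equality: a general cofibration may have generators that are not linearly independent in $IB$, but passing to a retract of a freely-generated model remedies this.
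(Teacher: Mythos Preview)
The paper does not prove this theorem; it is stated with attribution to Miller and a citation to \cite[Corollary 3.5]{MillerSullivan}, with no argument given beyond the remark about the corrected definition in \cite{MillerSullivanCorr}. So there is nothing in this paper to compare your proposal against.

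That said, your outline is the standard one and matches what Miller does in the cited source: exhibit almost-free maps as relative cell complexes via a skeletal filtration (so they are cofibrations), and conversely factor an arbitrary cofibration through the small object argument as an almost-free map followed by a trivial fibration, then apply the retract argument. Your identification of the delicate point is also accurate: one must check that the cells produced by the small object argument really do assemble into subspaces $V_n \subseteq IB_n$ on which the face maps $d_i$ for $i \neq 0$ and all degeneracies act linearly, and this is precisely the content addressed in \cite{MillerSullivanCorr}. One minor correction to your first direction: when building the skeletal filtration you should work with the \emph{nondegenerate} part of $V_n$ (the complement of the images of the $\sigma_i$), not all of $V_n$, in order to identify $B^{(k-1)} \hookrightarrow B^{(k)}$ as a pushout of a generating cofibration; otherwise you double-count degenerate generators.
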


\subsection{Higher Divided Square Operations}\label{subsec:ops}
In this subsection we review the \emph{higher divided square}
operations on the homotopy groups of simplicial commutative
$\F$-algebras. These operations were initially introduced by
Cartan~\cite{CartanPuissDiv}, and have subsequently also been studied
by Bousfield~\cite{BousfieldOpsDerFtr} and Dwyer~\cite{DwyerHtpyOps}.

\begin{defn}
  If $V$ is a simplicial $\F$-vector space, we write $C(V)$ for the
  unnormalized chain complex of $V$ (obtained by taking the
  alternating sum of the face maps as the differential) and $N(V)$ for
  the \emph{normalized} chain complex, given by \[N_{k}X = \bigcap_{i
    \neq 0} \ker(d_{i} \colon X_{k} \to X_{k-1})\] with differential
  $d_{0}$). These chain complexes are quasi-isomorphic, and their
  homology groups are the same as the homotopy groups of $V$, regarded
  as a simplicial set.
\end{defn}

\begin{thm}[Dwyer~\cite{DwyerHtpyOps}] \label{thm:deltaprops}
  Let $A$ be a simplicial commutative $\F$-algebra.
  \begin{enumerate}[(i)]
  \item There are maps $\delta_{i} \colon C(A)_{n} \to N(A)_{n+i}$, $i\geq1$,
    that satisfy
    \[d \delta_{i}(a) =
  \begin{cases}
    \delta_{i}(d a), & n > i > 1, \\
    \delta_{1}(d a) + \phi(a), & i = 1, n > 1, \\
    a\, d a, & i = n > 1,\\
    a\, d a + \phi(a), & n = i = 1.
  \end{cases}\]
  Here $\phi(a)$ denotes the image in $N(A)$ of the square $a^{2}$ of $a$ in the
  multiplication on $A_{n}$.
\item In particular, there are \emph{higher divided square} operations
  $\delta_{i} \colon \pi_{n}(A) \to \pi_{n+i}(A)$ for $2 \le i \le
  n$. If $a^{2} = 0$ for all $a \in A$, there is also an operation
  $\delta_{1} \colon \pi_{n}(A) \to \pi_{n+1}(A)$ for $n \ge 1$.
\item These
  operations have the following properties:
  \begin{enumerate}[(1)]
  \item $\delta_{i} \colon \pi_{n}A \to \pi_{n+i}A$ is an additive homomorphism
    for $2 \leq i < n$, and $\delta_{n}$ satisfies
    \[ \delta_{n}(x+y) = \delta_{n}(x) + \delta_{n}(y) + xy.\]
  \item $\delta_{i}$ acts on products as follows:
    \[ \delta_{i}(xy) =
    \begin{cases}
      x^{2}\delta_{i}(y), & x \in \pi_{0}A,\\
      y^{2}\delta_{i}(x), & y \in \pi_{0}A,\\
      0, & \text{otherwise}.
    \end{cases}
    \]
  \item (``Ad\'{e}m relations'') If $i < 2j$ then
    \[ \delta_{i}\delta_{j}(x) = \sum_{(i+1)/2 \leq s \leq (i+j)/3}
    \binom{j-i+s-1}{j-s} \delta_{i+j-s}\delta_{s}(x).\]
  \end{enumerate}
\end{enumerate}
\end{thm}

\begin{remark}
  Part (i) is not quite true using Dwyer's definition of the chain-level
  operations in
  \cite{DwyerHtpyOps}, but is correct for the variant due to Goerss
  \cite{GoerssAQ}.
\end{remark}

\begin{remark}
  The upper bound in the ``Adém relation'' above differs from that in
  \cite{DwyerHtpyOps}, which does not give a sum of admissible
  operations; this form of the relation was proved by Goerss and
  Lada~\cite{GoerssLada} and implies, by the same proof as for
  Steenrod operations, that composites of $\delta$-operations are
  spanned by admissible composites:
\end{remark}

\begin{defn}
  A sequence $I = (i_{1},\ldots, i_{k})$ is \defterm{admissible} if
  $i_{t} \geq 2 i_{t+1}$ for all $t$. A composite $\delta_{I} :=
  \delta_{i_{1}} \delta_{i_{2}} \cdots \delta_{i_{k}}$ is
  \defterm{admissible} if $I$ is.
\end{defn}

\begin{cor}
  Any composite of $\delta$-operations can be written as a sum of
  admissible ones.
\end{cor}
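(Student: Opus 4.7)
The plan is to argue by induction using a suitable weight function on sequences, mirroring the standard reduction argument for Steenrod operations.

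Given a sequence $I = (i_{1}, \ldots, i_{k})$ of positive integers, I would define
\[ w(I) := \sum_{t=1}^{k} 2^{t} i_{t}. \]
Since the Adém relation preserves the total index $\sum_{t} i_{t}$---both $\delta_{i}\delta_{j}$ and each $\delta_{i+j-s}\delta_{s}$ raise degree by $i+j$---I can work at a fixed total degree, so $w$ takes values in a discrete subset of $\mathbb{Z}_{\geq 0}$.

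The main point to verify is that applying the Adém relation to a non-admissible adjacent pair strictly decreases $w$. Concretely, suppose $(i_{t}, i_{t+1}) = (i, j)$ with $i < 2j$. Theorem~\ref{thm:deltaprops}(iii) rewrites $\delta_{i}\delta_{j}$ as an $\F$-linear combination of composites $\delta_{i+j-s}\delta_{s}$ with $(i+1)/2 \leq s \leq (i+j)/3$; for any such $s$, the resulting sequence $I'$ agrees with $I$ outside positions $t$ and $t+1$, so
\[ w(I') - w(I) = 2^{t}(i+j-s) + 2^{t+1} s - 2^{t} i - 2^{t+1} j = 2^{t}(s - j). \]
Because $i < 2j$ forces $s \leq (i+j)/3 < j$, this difference is strictly negative. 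Moreover, the constraint $s \leq (i+j)/3$ is equivalent to $i+j-s \geq 2s$, so each new pair $(i+j-s, s)$ is itself admissible.

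Given this, the proof is a routine descent. If $I$ is admissible, there is nothing to do; otherwise I choose any non-admissible adjacent pair and apply the Adém relation to express $\delta_{I}$ as a sum of $\delta_{I'}$ with $w(I') < w(I)$, then iterate. Well-foundedness of $\mathbb{Z}_{\geq 0}$ guarantees termination, yielding a presentation of $\delta_{I}$ as a sum of admissible composites. The only real obstacle is verifying the weight-decrease inequality above; once it is in hand the corollary follows immediately.
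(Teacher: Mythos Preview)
Your proof is correct and is precisely the standard descent argument that the paper invokes but does not spell out: the paper gives no explicit proof, remarking only that the Goerss--Lada form of the Ad\'em relation ``implies, by the same proof as for Steenrod operations, that composites of $\delta$-operations are spanned by admissible composites.'' Your weight function and the inequality $s \leq (i+j)/3 < j$ make this descent explicit, so there is nothing to add.
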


\begin{remark}
  For any $x$ in $\pi_{*}A$ in positive degree we have $x^{2} =
  0$. Thus (iii)(1) and (iii)(2) imply that the top operation $\delta_{n}$ on
  $\pi_{n}$ is a divided square, whence the name ``higher divided
  squares'' for the $\delta_{i}$-operations.
\end{remark}

Dwyer proves Theorem~\ref{thm:deltaprops} by computing the homotopy
groups in the universal case, namely the symmetric algebra $s(V)$ on a
simplicial vector space $V$. We will now recall the result of this
computation, as well as the analogous result for exterior algebras
(both of which are originally due to
Bousfield~\cite{BousfieldOpsDerFtr}). To state this we make use of the
following theorem of Dold:
\begin{thm}[Dold~{\cite[5.17]{DoldSymmProd}}]\label{thm:Dold}
  Let $\Vect$ be the category of $\F$-vector spaces, and $\grV$ that
  of graded $\F$-vector spaces. For any functor $F \colon
  \Vect^{\times n} \to \Vect$ there exists a functor $\mathfrak{F}
  \colon \grV^{\times n} \to \grV$ such that for $V_{1},\ldots,V_{n}$
  simplicial $\F$-vector spaces there is a natural isomorphism
  \[ \pi_{*} F(V_{1}, \ldots V_{n}) \cong \mathfrak{F}(\pi_{*}V_{1},
  \ldots, \pi_{*}V_{n}),\]
  where on the left-hand side we take the homotopy of the diagonal of
  the multisimplicial $\F$-vector space $F(V_{1}, \ldots V_{n})$.
\end{thm}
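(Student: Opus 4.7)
The plan is to exploit the rigidity of simplicial vector spaces over the field $\F$ together with the fact that any functor preserves simplicial homotopy. First, I would use the Dold--Kan correspondence to identify simplicial $\F$-vector spaces with nonnegatively graded chain complexes of $\F$-vector spaces. Over a field, every such chain complex is chain homotopy equivalent to its homology (concentrated in the appropriate degrees, with zero differential), by a direct splitting argument that uses only that vector spaces are projective. Transporting this equivalence back through Dold--Kan yields, for every simplicial $\F$-vector space $V$, a simplicial homotopy equivalence $V \simeq \bigoplus_{k \geq 0} K(\pi_k V, k)$, where $K(W,k)$ denotes the simplicial $\F$-vector space corresponding to $W$ placed in chain degree $k$.

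Next, I would verify that $F$ applied levelwise to simplicial vector spaces and then diagonalized preserves simplicial homotopy equivalences componentwise. A simplicial homotopy between maps of simplicial vector spaces is specified by linear maps $h_i$ satisfying compositional identities with face and degeneracy operators; applying $F$ in the varying coordinate while using degeneracies in the other coordinates produces, by functoriality of $F$ alone, a simplicial homotopy between the images. Iterating across the $n$ coordinates, and using that simplicial homotopy is an equivalence relation for simplicial groups, shows that componentwise simplicial homotopy equivalences become simplicial homotopy equivalences after $F$.

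Combining the two steps, we obtain a natural isomorphism \[\pi_* F(V_1, \ldots, V_n) \cong \pi_* F\Bigl(\bigoplus_k K(\pi_k V_1, k), \ldots, \bigoplus_k K(\pi_k V_n, k)\Bigr),\] so we may define $\mathfrak{F}(W_1, \ldots, W_n) := \pi_* F\bigl(\bigoplus_k K(W_1^k, k), \ldots, \bigoplus_k K(W_n^k, k)\bigr)$; this is manifestly functorial in morphisms of graded vector spaces. The main obstacle is the second step: the single-variable case is immediate from functoriality, but propagating simplicial homotopies through $F$ in several variables requires stitching together homotopies in each coordinate one at a time, which is where the multisimplicial nature of $F(V_1,\ldots,V_n)$ becomes technically delicate.
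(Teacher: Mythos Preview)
The paper does not supply its own proof of this statement; it is quoted as a result of Dold with a reference to \cite[5.17]{DoldSymmProd}, so there is nothing in the paper to compare your argument against directly.

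That said, your outline is essentially Dold's original argument and is correct. The two ingredients are exactly the ones you isolate: (a) over a field every simplicial vector space is simplicially homotopy equivalent to the Eilenberg--MacLane object built from its homotopy groups, and (b) any functor applied levelwise preserves simplicial homotopies, since the simplicial homotopy identities are purely compositional. The multivariable point you flag as the ``main obstacle'' is real but is handled just as you indicate: vary one slot at a time, and when the homotopy map $h_j$ is applied in the active coordinate, insert the degeneracy $s_j$ in each of the inert coordinates; functoriality of $F$ then carries the simplicial homotopy identities through verbatim. Alternatively one may sidestep the explicit homotopy and use that a map of bisimplicial abelian groups which is a weak equivalence in each row induces a weak equivalence on diagonals, then iterate over the $n$ directions. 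Either route completes your plan.
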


\begin{ex}
  The Eilenberg-Zilber theorem implies that if $F$ is the
  tensor product functor, then $\mathfrak{F}$ is the graded tensor
  product of graded vector spaces.
\end{ex}

In the symmetric algebra case, the functor $\mathfrak{s}$ such that
$\pi_{*}s(V) = \mathfrak{s}(\pi_{*}V)$ has the following description:
\begin{thm}[Bousfield~\cite{BousfieldOpsDerFtr}, Dwyer~\cite{DwyerHtpyOps}]\label{thm:Bousfield-Dwyer}
  The functor $\mathfrak{s}$ sends a graded vector space $V$ to that
  freely generated on $V$ by a commutative product and operations $\delta_i$
  satisfying the relations stated in Theorem~\ref{thm:deltaprops} 
  above as well as the relation $x^{2} = 0$ for all $x$ of positive degree.
\end{thm}

If $B$ is a graded basis for $V$, then $\mathfrak{s}(\pi_*(V))$ is the free
commutative algebra (modulo the relation $x^2=0$ for $|x|>0$) 
generated by elements $\delta_{I}v$ in degree 
$|v| + i_{1} + \cdots + i_{k}$ for admissible sequences 
$I = (i_{1},\ldots,i_{k})$ with $i_{k} \ge 2$   of 
\defterm{excess} $e(I) := i_{1} - i_{2} - \cdots - i_{k}$ at most
$|v|$, as $v$ runs over $B$.

Let $s_{k}(V)$ be the subspace of the symmetric algebra $s(V)$ spanned
by products of length $k$; it also is a functor $\Vect \to \Vect$.
Implicit in Theorem \ref{thm:Bousfield-Dwyer} is
the following description of the functor $\mathfrak{s}_{k}$ such
that $\pi_{*}s_{k}(V) = \mathfrak{s}_{k}(\pi_{*}V)$.
\begin{thm}
  Suppose $V$ is a graded vector space. Define inductively a weight
  function on products $\delta_{I_{1}}(v_{1})\cdots
  \delta_{I_{n}}(v_{n})$ where $v_{i} \in V$ and the $I_{i}$'s are
  admissible sequences by
  \[ \text{wt}(v) = 1 \text{ for $v$ in $V$}, \]
  \[ \text{wt}(xy) = \text{wt}(x) + \text{wt}(y), \]
  \[ \text{wt}(\delta_{i}(x)) = 2 \text{wt}(x).\]
  Then $\mathfrak{s}_{k}(V)$ is the subspace of $\mathfrak{s}(V)$
  spanned by elements of weight $k$.
\end{thm}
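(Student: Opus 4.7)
The plan is to exploit the direct sum decomposition $s(V) = \bigoplus_{k \geq 0} s_{k}(V)$ of the symmetric algebra as simplicial $\F$-vector spaces, which induces a decomposition $\mathfrak{s}(V) = \bigoplus_{k \geq 0} \mathfrak{s}_{k}(V)$ on homotopy via the natural inclusions $s_{k}(V) \hookrightarrow s(V)$. The task is then to show that the weight function defined in the statement records exactly which summand an element belongs to.

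I would verify three compatibilities between the weight grading and the algebraic structure on $\mathfrak{s}(V)$. First, since $V = s_{1}(V)$, every $v \in V$ lies in $\mathfrak{s}_{1}(V)$, matching $\mathrm{wt}(v) = 1$. Second, the multiplication of $s(V)$ restricts to maps $s_{i}(V) \otimes s_{j}(V) \to s_{i+j}(V)$, so the induced product on homotopy carries $\mathfrak{s}_{i}(V) \otimes \mathfrak{s}_{j}(V)$ into $\mathfrak{s}_{i+j}(V)$, matching $\mathrm{wt}(xy) = \mathrm{wt}(x) + \mathrm{wt}(y)$. Third, the operation $\delta_{i}$ doubles weight: it sends $\pi_{n} s_{k}(V)$ into $\pi_{n+i} s_{2k}(V)$.

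The main content is this third point, which I would prove by tracing through the chain-level construction $\delta_{i} = \mu \circ \gamma_{i}$. Given a chain $a \in C(s_{k}(V))_{n}$, the tensor products $a \otimes a$ and $a \otimes \partial a$ appearing in the definition of $\gamma_{i}(a)$ both lie in $C(s_{k}(V)) \otimes C(s_{k}(V))$, so the maps $D^{j}$ keep us inside $N(s_{k}(V) \otimes s_{k}(V))$; after passing through $\rho$ and then the multiplication $\mu \colon s_{k}(V)^{\otimes 2}_{\Sigma_{2}} \to s_{2k}(V)$, the result lies in $N(s_{2k}(V))$. The main obstacle is the bookkeeping around the nonlinearity of $\gamma_{i}$---one must verify that every auxiliary term genuinely lands in the weight-$2k$ summand, not in lower ones.

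Once the three compatibilities hold, every admissible monomial $\delta_{I_{1}}(v_{1}) \cdots \delta_{I_{n}}(v_{n})$ lies in the summand $\mathfrak{s}_{k}(V)$ for $k$ equal to its combinatorial weight. Since by the preceding theorem such monomials span $\mathfrak{s}(V)$ and the direct sum decomposition forces each monomial to have an unambiguous weight-label, $\mathfrak{s}_{k}(V)$ is exactly the span of weight-$k$ monomials. As a sanity check I would verify that the defining relations in Theorem~\ref{thm:deltaprops} (together with $x^{2} = 0$ in positive degree) are weight-homogeneous, so that the weight grading descends cleanly from the free algebra on the generators $\delta_{I}v$ to $\mathfrak{s}(V)$.
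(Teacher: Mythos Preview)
The paper does not supply a proof of this theorem; it is stated as background, in the same block of results attributed to Bousfield and Dwyer, and is used without further justification. Your argument is correct and is the natural one: the length decomposition $s(V) = \bigoplus_{k} s_{k}(V)$ is a splitting of simplicial vector spaces, multiplication respects it additively, and the chain-level formula $\delta_{i} = \mu \circ \gamma_{i}$ manifestly doubles length because $\gamma_{i}$ lands in $N(s_{k}(V)^{\otimes 2}_{\Sigma_{2}})$ and $\mu$ restricts to a map into $s_{2k}(V)$. One small comment: the nonlinearity of $\gamma_{i}$ is not really an obstacle here, since the only inputs to the $D^{j}$'s are $a \otimes a$ and $a \otimes \partial a$, both already in $C(s_{k}(V)) \otimes C(s_{k}(V))$; and your ``sanity check'' that the relations are weight-homogeneous is in fact forced by the direct-sum decomposition rather than something needing separate verification.
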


Let $e(V)$ denote the exterior algebra on $V$ and $e_{k}(V)$ its
subspace of products of length $k$. Then there are functors
$\mathfrak{e}$ and $\mathfrak{e}_{k}$ such that $\pi_{*}e(V) =
\mathfrak{e}(\pi_{*}V)$ and $\pi_{*}e_{k}(V) =
\mathfrak{e}_{k}(\pi_{*}V)$ for a simplicial vector space $V$. These
were also computed by Bousfield:
\begin{thm}[Bousfield~\cite{BousfieldOpsDerFtr}]
  The functor $\mathfrak{e}$ sends a graded vector space $V$ to that
  freely generated on $V$ by a commutative product and operations
  $\delta_{i}$ (now with $i=1$ allowed) satisfying the same relations
  as in the symmetric case, and with $x^{2} = 0$ for all $x$. Thus
  $\mathfrak{e}(V)$ is generated by $v \in V$ and symbols
  $\delta_{I}v$ for admissible sequences $I = (i_{1},\ldots,i_{k})$
  (now with $i_{k} \geq 1$) of excess $\leq |v|$; the element
  $\delta_{I}v$ is again in degree $|v| + i_{1} + \cdots +
  i_{k}$. Defining the weight of such a generator as before, the
  graded vector space $\mathfrak{e}_{k}V$ is the subspace of
  $\mathfrak{e}V$ spanned by elements of weight $k$.
\end{thm}

\begin{remark}
  The same results hold in the graded case. We will use capital letters for
the graded versions of the functors considered above: so $S(V)$ denotes the 
free graded symmetric algebra on the graded vector space $V$, etc.
The higher divided power operations $\delta_{i}$ double the internal degree.
\end{remark}

\subsection{An Abstract Approach to Filtered Algebras and
  Modules}\label{subsec:filtabstr}
Given filtered algebras $A$, $B$, and $C$, and maps $A \to B$ and $A
\to C$, we would like to construct a filtration on the relative tensor
product $B \otimes_{A} C$ whose associated graded is the relative
tensor product $E^{0}B \otimes_{E^{0}A} E^{0}C$ of graded algebras,
where $E^{0}A$ denotes the associated graded algebra of the filtered
algebra $A$. Our goal in this section is to show that this is
possible, provided we allow ourselves to take cofibrant replacements
of these algebras in a suitable model category. We will do this by
considering filtered objects, and in
particular filtered modules over a filtered algebra, from an abstract
perspective.

Let $\mathbf{N}$ denote the partially ordered set of natural numbers
$0,1,\ldots$, considered as a category. If $\mathbf{C}$ is a category,
we write $\txt{Seq}(\mathbf{C})$ for the category $\Fun(\mathbf{N},
\mathbf{C})$ of sequences of morphisms in $\mathbf{C}$. A filtered
object of $\mathbf{C}$, if $\mathbf{C}$ is for example the category of
chain complexes of abelian groups, can then be thought of as a certain
kind of object of $\txt{Seq}(\mathbf{C})$.

Addition of natural numbers is a symmetric monoidal structure on
$\mathbf{N}$, so if $\mathbf{C}$ is a category with finite colimits
and a symmetric monoidal structure that commutes with finite colimits
in each variable (for short, $\mathbf{C}$ is a symmetric monoidal
category \emph{compatible with finite colimits}) we can equip
$\txt{Seq}(\mathbf{C})$ with the Day convolution tensor product. This
has as unit the constant sequence
\[ I \to I \to \cdots \] with value the unit $I$ in $\mathbf{C}$, and
if $A$ and $B$ are sequences in
$\mathbf{C}$ their tensor product $A \otimes B$ is given by
\[ (A \otimes B)_{n} = \colim_{i+j \leq n} A_{i} \otimes B_{j}.\]
\begin{remark}
  A simple cofinality argument shows that this colimit is isomorphic
  to the iterated pushout
  \[ A_{n} \otimes B_{0} \amalg_{A_{n-1} \otimes B_{0}} A_{n-1}
  \otimes B_{1} \amalg_{A_{n-2} \otimes B_{1}} \cdots \amalg_{A_{0}
    \otimes B_{n-1}} A_{0} \otimes B_{n}, \] which we can also
  describe as the coequalizer of the two obvious maps
  \[ \coprod_{s+t = n-1} A_{s} \otimes B_{t} \rightrightarrows
  \coprod_{i+j=n} A_{i} \otimes B_{j}.\] In other words, $(A \otimes
  B)_{n}$ is the quotient of $\coprod_{i+j = n} A_{i} \otimes B_{j}$
  where we identify the images of $A_{s} \otimes B_{t}$ with $s+t =
  n-1$ in $A_{s+1} \otimes B_{t}$ and $A_{s} \otimes B_{t+1}$.
\end{remark}

If $A$ is an algebra object in $\mathbf{C}$, the Day convolution on
$\txt{Seq}(\mathbf{C})$ induces a relative tensor product on the
category $\Mod_{A}(\txt{Seq}(\mathbf{C}))$ of $A$-modules, given by
the (reflexive) coequalizer
\[ M \otimes A \otimes N \rightrightarrows M \otimes N \to M
\otimes_{A} N,\] where $M$ and $N$ are $A$-modules in
$\txt{Seq}(\mathbf{C})$.  If $\mathbf{C}$ is, for instance, chain
complexes, then a filtered algebra in $\mathbf{C}$ is in particular an
algebra object of $\txt{Seq}(\mathbf{C})$, and filtered $A$-modules
$M$ and $N$ are also modules for $A$ in $\txt{Seq}(\mathbf{C})$. The
tensor product of $A$-modules then yields an object $M \otimes_{A}N$
in $\txt{Seq}(\mathbf{C})$, but in general this need not be a filtered
object of $\mathbf{C}$ --- the maps in this sequence need no longer be
monomorphisms. However, we can use a model structure on $\mathbf{C}$
to deal with this: If $\mathbf{C}$ is a combinatorial model category,
we can equip $\txt{Seq}(\mathbf{C})$ with the projective model
structure. A cofibrant object in $\txt{Seq}(\mathbf{C})$ is then a
sequence
\[ A_{0} \to A_{1} \to \cdots \]
where the objects $A_{i}$ are all cofibrant, and the morphisms $A_{i}
\to A_{i+1}$ are all cofibrations. If cofibrations in
$\mathbf{C}$ are monomorphisms, as they are for chain complexes or
simplicial algebras, then a cofibrant object of
$\txt{Seq}(\mathbf{C})$ is thus in particular a filtered object.

The Day convolution tensor product interacts well with this model
structure:
\begin{propn}[Isaacson]
  Let $\mathbf{C}$ be a symmetric monoidal combinatorial model
  category that satisfies the monoid axiom. Then
  $\txt{Seq}(\mathbf{C})$ is also a symmetric monoidal combinatorial
  model category with respect to the Day convolution and satisfies the
  monoid axiom.
\end{propn}
\begin{proof}
  This is a special case of Proposition 8.4 in the arXiv version of
  \cite{Isaacson} (which unfortunately does not appear in the
  published version).
\end{proof}

We can now apply results of Schwede and Shipley to get the following:
\begin{cor}\label{cor:modalgmodstr}
  Let $\mathbf{C}$ be a symmetric monoidal combinatorial model
  category that satisfies the monoid axiom, and suppose $A$ is a
  commutative algebra object in $\txt{Seq}(\mathbf{C})$. Then:
  \begin{enumerate}[(i)]
  \item The category $\Alg(\txt{Seq}{\mathbf{C}})$ of associative algebra
    objects of $\txt{Seq}(\mathbf{C})$ is a combinatorial model
    category. The forgetful functor to
    $\txt{Seq}(\mathbf{C})$ creates weak equivalences and fibrations,
    and the free-forgetful
    adjunction
    \[ \txt{Seq}(\mathbf{C}) \rightleftarrows
    \txt{Alg}(\txt{Seq}(\mathbf{C})) \] is a Quillen adjunction.
  \item If the unit of $\mathbf{C}$ is cofibrant, then the forgetful
    functor $\Alg(\txt{Seq}(\mathbf{C}) \to \txt{Seq}(\mathbf{C})$
    preserves cofibrant object.
  \item The category $\txt{Mod}_{A}(\txt{Seq}(\mathbf{C}))$ of
    $A$-modules is a symmetric monoidal combinatorial model
    category satisfying the monoid axiom. The forgetful functor to
    $\txt{Seq}(\mathbf{C})$ creates weak equivalences and fibrations,
    and the free-forgetful
    adjunction
    \[ F_{A} : \txt{Seq}(\mathbf{C}) \rightleftarrows
    \txt{Mod}_{A}(\txt{Seq}(\mathbf{C})) : U_{A}\] is a Quillen
    adjunction.
  \item If the underlying object of $A$ is cofibrant in
    $\txt{Seq}(\mathbf{C})$ then the forgetful functor $U_{A}$ also
    preserves cofibrations.
  \end{enumerate}
\end{cor}
\begin{proof}
  (i), (ii) and (iii) follow from \cite[Theorem
  4.1]{SchwedeShipleyAlgMod}, and (iv) is an easy consequence of the
  construction of the model structure using
  \cite[Lemma 2.3]{SchwedeShipleyAlgMod}: If $I$ is a set of
  generating cofibrations in $\txt{Seq}(\mathbf{C})$, then $F_{A}(I)$
  is a set of generating cofibrations in $A$-modules. The triple
  $U_{A}F_{A}$ is $A \otimes \blank$, which is a left Quillen functor
  if $A$ is cofibrant in $\txt{Seq}(\mathbf{C})$. Thus $U_{A}$ takes
  the generating cofibrations to cofibrations. But $U_{A}$ also
  preserves colimits, so as any cofibration is a transfinite composite
  of pushouts of generating cofibrations this means it preserves all
  cofibrations.
\end{proof}

\begin{cor}\label{cor:tenscofibt}
  Let $\mathbf{C}$ be a symmetric monoidal combinatorial model
  category that satisfies the monoid axiom, and suppose $A$ is a
  commutative algebra object in $\txt{Seq}(\mathbf{C})$ whose
  underlying object in $\txt{Seq}(\mathbf{C})$ is cofibrant. If $M$
  and $N$ are cofibrant $A$-modules, then $M \otimes_{A} N$ is a
  cofibrant object of $\txt{Seq}(\mathbf{C})$.
\end{cor}
This is the result we need to make our spectral sequence: if $A$ is a
suitable filtered algebra in, say, chain complexes, and $M$ and $N$
are filtered $A$-modules, we can take cofibrant replacements for them
in the model structure on $\txt{Mod}_{A}(\txt{Seq}(\mathbf{C})$ to get
a cofibrant relative tensor product over $A$, which is in particular a
filtered object and so gives a spectral sequence.

Next we want to analyze the associated graded object of such a
relative tensor product, which will allow us to describe the
$E^{1}$-page of our spectral sequence:
\begin{defn}
  Let $\mathbf{C}$ be a category with finite colimits and a zero
  object $0$. Write $\txt{Gr}(\mathbf{C})$ for the product $\prod_{i =
    0}^{\infty} \mathbf{C}$ and $\txt{Triv} \colon
  \txt{Gr}(\mathbf{C}) \to \txt{Seq}(\mathbf{C})$ for the functor that
  sends $(X_{i})_{i \in \mathbb{N}}$ to the sequence
  \[ X_{0} \xto{0} X_{1} \xto{0} \cdots.\] This has a left adjoint
  $E^{0} \colon \txt{Seq}(\mathbf{C}) \to \txt{Gr}(\mathbf{C})$, the
  \emph{associated graded} functor. We have $(E^{0}A)_{0} = A_{0}$ and
  $(E^{0}A)_{n}$ for $n > 0$ is the quotient $A_{n}/A_{n-1}$, i.e. the
  pushout
  \nolabelcsquare{A_{n-1}}{A_{n}}{0}{A_{n}/A_{n-1}.}
\end{defn}

If $\mathbf{C}$ has a symmetric monoidal structure, then we can equip
$\txt{Gr}(\mathbf{C})$ with a graded tensor product (another Day
convolution), given by \[(X \otimes Y)_{n} = \coprod_{i+j=n} X_{i} \otimes
Y_{j}.\]
The unit is $(I, 0, 0, \ldots)$.
\begin{propn}\label{propn:E0mon}
  Let $\mathbf{C}$ be a symmetric monoidal category compatible with
  finite colimits that has a zero object. Then the functor $E^{0}
  \colon \txt{Seq}(\mathbf{C}) \to \txt{Gr}(\mathbf{C})$ is symmetric
  monoidal.
\end{propn}
\begin{proof}
  $E^{0}$ clearly preserves the unit, so it suffices to show that
  there is a natural isomorphism $E^{0}M \otimes E^{0}N \isoto E^{0}(M
  \otimes N)$.

  By definition, $E^{0}_{n}(M \otimes N)$ is the cofibre of $(M
  \otimes N)_{n-1} \to (M \otimes N)_{n}$. For $n \in \mathbf{N}$, let
  $(\mathbf{N} \times \mathbf{N})_{\leq n}$ denote the full
  subcategory of $\mathbf{N} \times \mathbf{N}$ spanned by the objects
  $(i,j)$ with $i+j \leq n$; if $M \boxtimes N$ denotes the composite functor
  \[\mathbf{N} \times \mathbf{N} \xto{M \times N} \mathbf{C} \times
  \mathbf{C} \xto{\otimes} \mathbf{C},\] then $(M \otimes N)_{n}$ is
  by definition given by the colimit of $M \boxtimes N$ restricted to
  $(\mathbf{N} \times \mathbf{N})_{\leq n}$. Let $\alpha$ denote the
  inclusion $(\mathbf{N} \times \mathbf{N})_{\leq (n-1)}
  \hookrightarrow (\mathbf{N} \times \mathbf{N})_{\leq n}$; then $(M
  \otimes N)_{n-1}$ is isomorphic to the colimit of the left Kan
  extension $\alpha_{!}(M \boxtimes N)|_{(\mathbf{N} \times
    \mathbf{N})_{\leq (n-1)}}$. Thinking of $0$ as the constant
  diagram of shape $(\mathbf{N} \times \mathbf{N})_{\leq n}$ with
  value $0$, we can write $E^{0}_{n}(M \otimes N)$ as the pushout of
  two maps between colimits of diagrams of the same shape. Moreover,
  these maps arise from natural transformations, so since colimits commute we
  can identify $E^{0}_{n}(M \otimes N)$ with the colimit of the
  functor $\beta \colon (\mathbf{N} \times \mathbf{N})_{\leq n} \to \mathbf{C}$
  that assigns to $(i,j)$ the cofibre of the map
  \[ \phi_{i,j} \colon (\alpha_{!}(M \boxtimes N)|_{(\mathbf{N} \times
    \mathbf{N})_{\leq (n-1)}})(i,j) \to (M \boxtimes N)(i,j).\] If
  $i+j < n$ then, since $(\mathbf{N} \times \mathbf{N})_{\leq (n-1)}$
  is a full subcategory of $(\mathbf{N} \times \mathbf{N})_{\leq n}$,
  the map $\phi_{i,j}$ is an isomorphism, so $\beta(i,j) \cong 0$. It
  follows that the colimit of $\beta$ is just the coproduct
  $\coprod_{i+j = n} \beta(i,j)$, and it remains to show that
  $\beta(i,j)$ is isomorphic to $E^{0}_{i}M \otimes E^{0}_{j}N$.

  If $i + j = n$, let $(\mathbf{N} \times \mathbf{N})_{<(i,j)}$ be the full
  subcategory of $\mathbf{N} \times \mathbf{N}$ spanned by the objects
  $(x,y)$ with $x \leq i$ and $y \leq j$, except for $(i,j)$. Then by
  definition 
  $\alpha_{!}((M \boxtimes N)|_{(\mathbf{N} \times
    \mathbf{N})_{\leq (n-1)}})(i,j)$ is the colimit of $M \boxtimes N$
  restricted to $(\mathbf{N} \times \mathbf{N})_{<(i,j)}$.

  Write $(\mathbf{N} \times \mathbf{N})_{<(i,j)}^{0}$ for the full
  subcategory 
  \[ (i-1,j) \from (i-1,j-1) \to (i,j-1)\] of $(\mathbf{N} \times
  \mathbf{N})_{<(i,j)}$. We claim the inclusion $(\mathbf{N} \times
  \mathbf{N})_{<(i,j)}^{0} \hookrightarrow (\mathbf{N} \times
  \mathbf{N})_{<(i,j)}$ is cofinal, and so gives an isomorphism of
  colimits. By \cite[Theorem IX.3.1]{MacLaneWorking}, to see this it suffices to show
  that the categories \[((\mathbf{N} \times
  \mathbf{N})_{<(i,j)}^{0})_{(x,y)/} = (\mathbf{N} \times
  \mathbf{N})_{<(i,j)}^{0} \times_{(\mathbf{N} \times
    \mathbf{N})_{<(i,j)}} ((\mathbf{N} \times
  \mathbf{N})_{<(i,j)})_{(x,y)/}\] are non-empty and connected. But this
  category is either all of $(\mathbf{N} \times
  \mathbf{N})_{<(i,j)}^{0}$ if $x \leq i-1$ and $y \leq j-1$, or the
  single object $(i,j-1)$ if $x = i$, or the single object $(i-1,j)$
  if $y = j$; these are certainly all non-empty and connect. We may thus
  identify $\alpha_{!}(M \boxtimes N)|_{(\mathbf{N} \times
    \mathbf{N})_{\leq (n-1)}})(i,j)$ with the pushout $M_{i} \otimes
  N_{j-1} \amalg_{M_{i-1} \otimes N_{j-1}} M_{i-1} \otimes N_{j}$ and
  $\beta(i,j)$ with the total cofibre of the square \nolabelcsquare{M_{i-1}
    \otimes N_{j-1}}{M_{i-1} \otimes N_{j}}{M_{i} \otimes
    N_{j-1}}{M_{i} \otimes N_{j}.}  The cofibres of the columns here
  are $E^{0}_{i}M \otimes N_{j-1}$ and $E^{0}_{i}M \otimes N_{j}$,
  since the tensor product preserves colimits in each variable, and so
  the total cofibre $\beta(i,j)$ is isomorphic to the cofibre of the
  map $E^{0}_{i}M \otimes N_{j-1} \to E^{0}_{i}M \otimes N_{j}$, which
  is $E^{0}_{i}M \otimes E^{0}_{j}N$, as required.
\end{proof}

\begin{cor}
  Suppose $A$ is a commutative algebra object of
  $\txt{Seq}(\mathbf{C})$, where $\mathbf{C}$ is as above. Then the
  adjunction $E^{0} \dashv \txt{Triv}$ induces an adjunction
  \[ E^{0} : \Mod_{A}(\txt{Seq}(\mathbf{C})) \rightleftarrows
  \Mod_{E^{0}A}(\txt{Gr}(\mathbf{C})) : \txt{Triv} \]
  such that $E^{0}$ is symmetric monoidal.
\end{cor}

This is immediate from Proposition~\ref{propn:E0mon} and the following
easy formal observation:
\begin{lemma}\label{lem:modadj}
  Let $\mathbf{C}$ and $\mathbf{D}$ be symmetric monoidal categories,
  and suppose 
  \[ F : \mathbf{C} \rightleftarrows \mathbf{D} : G \]
  is an adjunction such that $F$ is symmetric monoidal. If $A$ is a
  commutative algebra object of $\mathbf{C}$, this induces an
  adjunction
  \[ F_{A} : \Mod_{A}(\mathbf{C}) \rightleftarrows
  \Mod_{FA}(\mathbf{D}) : G_{A} \] such that $F_{A}$ is symmetric
  monoidal.
\end{lemma}

This allows us to identify the associated graded of a relative tensor
product:
\begin{cor}\label{cor:E0tens}
  Let $\mathbf{C}$ be a symmetric monoidal category compatible with
  finite colimits that has a zero object. Suppose $A$ is a
  commutative algebra object in $\txt{Seq}(\mathbf{C})$ and that $M$
  and $N$ are $A$-modules. Then there is a natural
  isomorphism \[E^{0}(M \otimes_{A} N) \cong E^{0}M \otimes_{E^{0}A}
  E^{0}N.\]
\end{cor}

Finally, we check that the colimit of a relative tensor product is the
expected one:
\begin{propn}
  Suppose $\mathbf{C}$ is a symmetric monoidal category compatible
  with small colimits. Then the colimit functor $\txt{Seq}(\mathbf{C}) \to
  \mathbf{C}$ is symmetric monoidal.
\end{propn}
\begin{proof}
  The unit for the tensor product on $\txt{Seq}(\mathbf{C})$ is the
  constant sequence with value $I$, the unit for the tensor product on
  $\mathbf{C}$. Thus $\colim$ preserves the unit. It remains to show
  that the natural map $\colim_{n} (A \otimes B)_{n} \to \colim_{n}
  A_{n} \otimes \colim_{n} B_{n}$ is an isomorphism. But the object
  $\colim_{n} (A \otimes B)_{n}$ is clearly the colimit over $(i,j)
  \in \mathbf{N} \times \mathbf{N}$ of $A_{i} \otimes B_{j}$. Since
  the tensor product on $\mathbf{C}$ preserves colimits in each
  variable, this colimit is indeed equivalent to $(\colim_{i \in
    \mathbf{N}} A_{i})\otimes (\colim_{i \in \mathbf{N}} B_{i})$.
\end{proof}

Applying Lemma~\ref{lem:modadj}, we get:
\begin{cor}
  Let $\mathbf{C}$ be as above. Suppose $A$ is a commutative
  algebra object in $\txt{Seq}(\mathbf{C})$ with colimit
  $\overline{A}$. Then the colimit-constant adjunction induces an
  adjunction
  \[ \colim : \Mod_{A}(\txt{Seq}(\mathbf{C})) \rightleftarrows
  \Mod_{\overline{A}}(\mathbf{C}) : \txt{const}\]
  where the left adjoint is symmetric monoidal.
\end{cor}

\begin{cor}\label{cor:filttenscolim}
  Suppose $A$ is a commutative algebra object in
  $\txt{Seq}(\mathbf{C})$ with colimit $\overline{A}$, and $M$ and
  $N$ are $A$-modules with colimits $\overline{M}$ and
  $\overline{N}$. Then $\colim M \otimes_{A} N$ is
  naturally isomorphic to $\overline{M} \otimes_{\overline{A}} \overline{N}$.
\end{cor}

\subsection{A Serre Spectral Sequence for Simplicial Commutative
  Algebras}\label{subsec:SerreSpSeq}
In this subsection we construct a multiplicative ``Serre spectral
sequence'' for the homotopy groups of the cofibre of a cofibration of
simplicial commutative algebras. We derive this by studying a spectral
sequence for filtered modules over a filtered differential graded
algebra. Our spectral sequence has the same form as one constructed by
Quillen~\cite{QuillenHtpclAlg}, but his construction does not give the
multiplicative structure.

\begin{remark}
  We will implicitly assume that all filtrations we consider are
  non-negatively graded and \emph{exhaustive}, in the sense that if
  $F_{0}A \subseteq F_{1}A \subseteq \cdots$ is a filtration of $A$,
  then $A$ is the union of the subobjects $F_{i}A$.
\end{remark}

\begin{propn}\label{propn:filtalgE1}
  Suppose $A$ is a filtered commutative differential graded $k$-algebra,
  non-negatively graded, where $k$ is a field, and $B$ and $C$ are
  filtered $A$-modules, also non-negatively graded.
  \begin{enumerate}[(i)]
  \item If $B$ and $C$ are cofibrant in the model structure on
    $A$-modules in sequences of maps of chain complexes of
    Corollary~\ref{cor:modalgmodstr}, then the tensor
    product $B \otimes_{A} C$ has a canonical filtration with
    associated graded
    \[E^{0}_{*}(B \otimes_{A}C) \cong E^{0}_{*}B \otimes_{E^{0}_{*}A}
    E^{0}_{*}C.\]
  \item Suppose $B$ and $C$ are in addition filtered
    $A$-algebras. Then the filtration of (i) makes $B \otimes_{A} C$ a
    filtered algebra, so the associated spectral sequence is multiplicative.
  \end{enumerate}
\end{propn}
\begin{proof}
  Since $k$ is a field, every $k$-module is projective, hence in the
  projective model structure on the category $\txt{Ch}^{\geq 0}_{k}$
  of non-negatively graded chain complexes of $k$-modules every object
  is cofibrant. Thus in the projective model structure on
  $\txt{Seq}(\txt{Ch}^{\geq 0}_{k})$ the cofibrant objects are
  precisely those that are sequences of monomorphisms, i.e. those that
  correspond to filtered chain complexes. Part (i) then follows from
  Corollaries~\ref{cor:tenscofibt}, \ref{cor:E0tens} and
  \ref{cor:filttenscolim} applied to the projective model structure on
  chain complexes of $k$-modules.

  If $B$ and $C$ are filtered $A$-algebras, then we may regard them as
  associative algebra objects in the category
  $\Mod_{A}(\txt{Seq}(\txt{Ch}^{\geq 0}_{k}))$. Their relative tensor
  product is then also an associative algebra object in this category,
  and by (i) its underlying object of $\txt{Seq}(\txt{Ch}^{\geq
    0}_{k})$ corresponds to a filtered chain complex. Thus $B
  \otimes_{A} C$ is a filtered algebra, and so yields a multiplicative
  spectral sequence.
\end{proof}

\begin{propn}\label{propn:degfiltE2}
  Suppose $A$ is a commutative differential graded $k$-algebra and $B$
  and $C$ are $A$-modules, all non-negatively graded. Filter $A$ and
  $B$ by degree, and give $C$ the trivial filtration with $F_{p}C = C$
  for all $p \geq 0$. Let $B'$ and $C'$ be cofibrant replacements of
  $B$ and $C$ as $A$-modules in $\txt{Seq}(\txt{Ch}_{k}^{\geq
    0})$. Then in the spectral sequence associated to the induced
  filtration on $B' \otimes_{A} C'$ we have
  \begin{enumerate}[(i)]
  \item $E^{1}_{s,t} = (B' \otimes_{A} \pi_{t-s}C)_{s}$, where $A$ acts
    on $\pi_{*}C$ via the map $A \to \pi_{0}A$.
  \item $E^{2}_{s,t} = \pi_{s}(B' \otimes_{A} \pi_{t-s}C)$.
  \end{enumerate}
\end{propn}
\begin{proof}
  The graded tensor product $E^{0}_{*}B' \otimes_{E^{0}_{*}A} E^{0}_{*}C'$ has in
  degree $(s, t)$ the coequalizer of
  \[ \bigoplus_{\substack{i + j + k = s \\
      \rho + \sigma + \tau = t}} E^{0}_{i}B'_{\rho} \otimes E^{0}_{j}A_{\sigma} \otimes E^{0}_{k}C'_{\tau}
  \rightrightarrows  \bigoplus_{\substack{m + n = s \\
      \alpha + \beta = t}} E^{0}_{m}B'_{\alpha} \otimes E^{0}_{n}C'_{\beta}.\] 
  In our case $E^{0}_{l}B'_{\gamma}$ and $E^{0}_{l}A_{\gamma}$ are zero unless $l =
  \gamma$, and $E^{0}_{l}C'_{\gamma}$ is zero unless $l = 0$, so this
  is the coequalizer of
  \[ \bigoplus_{i + j = s} B'_{i} \otimes A_{j} \otimes C'_{t-s}
  \rightrightarrows  B'_{s} \otimes C'_{t-s}.\] 
  Now observe that the map $A_{j} \otimes C'_{t-s} \to C'_{t-s}$ is zero
  unless $j = 0$, since $A_{j}$ is in filtration $j$ and so the
  product must lie in $(E^{0}_{j}C')_{t-s} = 0$. Thus we can describe this
  coequalizer as killing all elements of the form $a \cdot b$ with $a
  \in A$ and $b \in B'$, giving $(B' \otimes_{A} \pi_{0}A)_{s}
  \otimes_{\pi_{0}A} C'_{t-s} \cong (B' \otimes_{\pi_{0}A}
  C'_{t-s})_{s}$.
  
  The differential in $B' \otimes_{A} C'$ satisfies the Leibniz rule, so
  if $b \otimes c$ is in filtration $s$ then $d(b \otimes c) = db
  \otimes c + b \otimes dc$. Here $db$ is in lower filtration than $b$,
  since it is in lower degree, hence $d_{0}$ comes from the
  differential in $C'$. Thus \[E^{1}_{s,t} \cong (B' \otimes_{\pi_{0}A}
  \pi_{t-s}C)_{s}.\]
  Similarly, the next differential $d_{1}$ comes from the differential
  in $B$, giving
  \[E^{2}_{s,t} \cong \pi_{s}(B' \otimes_{\pi_{0}A}
  \pi_{t-s}C).\qedhere\]
\end{proof}

The Dold-Kan correspondence extends to a Quillen equivalence of model categories
(cf. \cite[\S 4]{SchwedeShipleyMon}) between simplicial modules and
chain complexes, where the weak equivalences in
the two categories are the $\pi_{*}$-isomorphisms and the
quasi-isomorphisms, respectively. Using these model structures we can
define \emph{derived tensor products} as follows:
\begin{defn}
  If $A$ is a simplicial graded $\F$-algebra, $M$ is a right $A$-module,
  and $N$ is a left $A$-module, then the \defterm{derived tensor
    product} $M \otimesL_{A} N$ is the homotopy colimit of the
  simplicial diagram given by the bar construction, $M \otimes
  A^{\otimes \bullet} \otimes N$. Similarly, if $A$ is an algebra in
  chain complexes of graded $\F$-vector spaces, $M$ is a right
  $A$-module, and $N$ is a left $A$-module, we define a derived tensor
  product $M \otimesL_{A} N$ as the analogous homotopy colimit.
\end{defn}

\begin{remark}
  In the simplicial case, the homotopy colimit is given by the
  diagonal of the bar construction.
\end{remark}

\begin{remark}
  If $M$ is a cofibrant $A$-module, then for any $N$ the derived
  tensor product $M \otimesL_{A} N$ is equivalent to the ordinary
  tensor product $M \otimes_{A} N$.
\end{remark}

\begin{lemma}\label{lem:dertens}
  Let $A$ be a simplicial graded algebra, $X$ a right $A$-module, and
  $Y$ a  left $A$-module. There is a natural quasi-isomorphism \[N(X)
  \otimesL_{N(A)} N(Y) \to N(X \otimesL_{A} Y).\]
\end{lemma}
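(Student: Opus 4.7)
The plan is to exploit the lax symmetric monoidal structure on the normalized chain complex functor. The shuffle (Eilenberg–Zilber) map provides a natural quasi-isomorphism $\nabla \colon N(U) \otimes N(V) \to N(U \otimes V)$ for simplicial graded $\F$-vector spaces $U$ and $V$, and it is lax symmetric monoidal. Consequently $N(A)$ inherits a DG-algebra structure and $N(X)$, $N(Y)$ inherit compatible $N(A)$-module structures, so the derived tensor product $N(X) \otimesL_{N(A)} N(Y)$ on the chain-complex side is well-defined.

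First, iterating $\nabla$, in each simplicial (bar) degree $n$ I obtain a natural quasi-isomorphism
\[
\nabla^{(n)} \colon N(X) \otimes N(A)^{\otimes n} \otimes N(Y) \longrightarrow N(X \otimes A^{\otimes n} \otimes Y).
\]
The lax monoidality of $\nabla$ — its compatibility with the multiplication and unit of $A$ and with the left and right actions on $Y$ and $X$ — ensures that these levelwise maps assemble into a morphism of simplicial objects in chain complexes, from the bar construction computing $N(X) \otimesL_{N(A)} N(Y)$ to the levelwise normalization of the bar construction $B_{\bullet}(X,A,Y) := X \otimes A^{\otimes \bullet} \otimes Y$ whose diagonal models $X \otimesL_A Y$.

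Next I pass to homotopy colimits. By definition, the totalization of the source simplicial chain complex is $N(X) \otimesL_{N(A)} N(Y)$. For the target, the Eilenberg–Zilber theorem for bisimplicial objects — itself proved by iterated shuffle maps — identifies the totalization of $[n] \mapsto N(X \otimes A^{\otimes n} \otimes Y)$ with $N$ of the diagonal of $B_{\bullet}(X,A,Y)$, which by definition is $N(X \otimesL_A Y)$. Composing yields the natural map asserted in the lemma.

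Finally, since each of the levelwise maps $\nabla^{(n)}$ is a quasi-isomorphism, a standard spectral sequence comparison argument — filter both totalizations by bar degree and compare the induced maps on $E^{1}$-pages — shows that the induced map on totalizations is a quasi-isomorphism as well. The main obstacle is the compatibility of $\nabla$ with the face and degeneracy operators of the bar construction, i.e., the verification that $\nabla$ is lax monoidal with respect to the algebra multiplication, unit, and module actions; once that classical fact is in hand, the rest of the argument is formal.
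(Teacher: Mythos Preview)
Your argument is correct and follows essentially the same route as the paper: use the Eilenberg--Zilber shuffle map to get levelwise quasi-isomorphisms between the two bar constructions, then pass to homotopy colimits. You are more explicit than the paper about two points it leaves implicit---the lax monoidality of $\nabla$ needed to make the levelwise maps assemble into a map of simplicial diagrams, and the bisimplicial Eilenberg--Zilber identification of the totalization of $[n]\mapsto N(B_n)$ with $N$ of the diagonal---so your version is in fact a tightening rather than a departure.
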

\begin{proof}
  There is a natural quasi-isomorphism $N(U) \otimes N(V) \to N(U
  \otimes V)$ for all simplicial abelian groups $U$ and $V$. Thus
  there is a natural transformation of simplicial diagrams $N(X
  \otimes A^{\otimes \bullet} \otimes Y) \to NX \otimes (NA)^{\otimes
    \bullet} \otimes NY$ that is a quasi-isomorphism levelwise. This
  implies that the induced map on homotopy colimits is also a
  quasi-isomorphism.
\end{proof}

\begin{cor}\label{cor:simpltensspseq}
  Suppose given simplicial augmented graded $\F$-algebras $A$, $B$,
  and $C$, and maps $A \to B$ and $A \to C$. Then there is a
  multiplicative spectral sequence
  \[ E^{2}_{s,t} = \pi_{s}(B \otimesL_{A} \pi_{t-s}(C))
  \Rightarrow \pi_{t}(B \otimesL_{A} C).\]
\end{cor}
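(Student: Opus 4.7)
The plan is to reduce to Proposition \ref{propn:degfiltE2} by combining the Dold--Kan correspondence of \S\ref{subsec:DoldKan} with a cofibrant replacement in the model category of simplicial augmented graded commutative $\F$-algebras of \S\ref{subsec:ModCat}.

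First I would factor $A \to B$ as a cofibration $A \to B'$ followed by a weak equivalence $B' \to B$, taking $A \to B'$ to be almost-free in the sense of Miller. Since $B'$ is then degreewise a polynomial $A$-algebra, it is in particular free as a graded $A$-module, so the canonical map $B' \otimes_A X \to B \otimesL_A X$ is a weak equivalence for every simplicial $A$-module $X$. In particular both $B \otimesL_A C$ and $B \otimesL_A \pi_{t-s}C$ can be modeled by the ordinary tensor products $B' \otimes_A C$ and $B' \otimes_A \pi_{t-s}C$.

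Next, I would apply the normalized chain complex functor $N$ and use Lemma \ref{lem:dertens} to identify $N(B' \otimes_A C)$ with the ordinary tensor product $N(B') \otimes_{N(A)} N(C)$ up to quasi-isomorphism. Proposition \ref{propn:degfiltE2} now applies to the dga $N(A)$ with modules $N(B')$ and $N(C)$: filter $N(A)$ and $N(B')$ by chain complex degree and give $N(C)$ the trivial filtration $F_p N(C) = N(C)$. The proposition yields a spectral sequence
\[ E^2_{s,t} = \pi_s\bigl(N(B') \otimes_{N(A)} \pi_{t-s}N(C)\bigr) \Rightarrow \pi_t\bigl(N(B') \otimes_{N(A)} N(C)\bigr). \]
Since the homology of $N(X)$ agrees with $\pi_* X$, combining this with the cofibrant-replacement identifications above rewrites the spectral sequence in the form claimed in the corollary.

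For multiplicativity, observe that $B'$ is an $A$-algebra by construction and $C$ is an $A$-algebra by hypothesis, so $N(B')$ and $N(C)$ are $N(A)$-algebras; moreover the degree filtration on $N(B')$ and the trivial filtration on $N(C)$ are both multiplicative. Lemma \ref{lem:filtalgE1}(iii) then produces a multiplicative spectral sequence, and multiplicativity is preserved through the quasi-isomorphisms above. The main subtle point will be verifying that the cofibrant replacement is simultaneously a cofibration of simplicial commutative $A$-algebras (needed for multiplicativity) and flat as an underlying $A$-module (needed so the derived tensor product is computed by the ordinary one); both conditions follow at once from almost-freeness.
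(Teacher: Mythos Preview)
Your approach is essentially the paper's: both pass to normalized chain complexes, apply Proposition~\ref{propn:degfiltE2} after a cofibrant replacement, and invoke Lemma~\ref{lem:dertens} to translate back. The only difference is \emph{where} the replacement is made: the paper replaces $NB$ by a cofibrant $NA$-module $P$ on the chain-complex side, whereas you replace $B$ by an almost-free $A$-algebra $B'$ on the simplicial side and then apply $N$.

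There is, however, one genuine gap in your version. Proposition~\ref{propn:degfiltE2} applied to $N(A)$, $N(B')$, $N(C)$ yields a spectral sequence converging to $\pi_t\bigl(N(B') \otimes_{N(A)} N(C)\bigr)$ with the \emph{underived} tensor product, and Lemma~\ref{lem:dertens} only identifies the \emph{derived} tensor products $N(B') \otimesL_{N(A)} N(C) \simeq N(B' \otimesL_A C)$. To close the gap you would need $N(B')$ to be cofibrant (or at least flat) as an $N(A)$-module. Almost-freeness of $B'$ over $A$ gives levelwise freeness of $B'$ as a simplicial $A$-module, but $N$ is only lax monoidal, and it does not follow that $N(B')$ is flat over $N(A)$; the same issue recurs for the $E^2$-term $N(B') \otimes_{N(A)} \pi_{t-s}N(C)$. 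The paper sidesteps this entirely by choosing the cofibrant replacement $P$ directly on the dg side, so that $P \otimes_{N(A)}(-)$ models the derived tensor product by construction.

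On the other hand, your route does buy something: multiplicativity is transparent, since $B'$ is manifestly an $A$-algebra and hence $N(B')$ an $N(A)$-algebra, so Lemma~\ref{lem:filtalgE1}(iii) applies on the nose. The paper's phrase ``cofibrant replacement for $NB$ as an $NA$-module'' does not by itself give $P$ an algebra structure; one really wants $P$ to be a cofibrant $NA$-\emph{algebra}, a point the paper leaves implicit.
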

\begin{proof}
  Let $P$ be a cofibrant replacement for $NB$ as an $NA$-module in $\txt{Seq}(\txt{Ch}^{\geq 0}_{k})$. Then
  by Proposition~\ref{propn:degfiltE2} we have a spectral sequence
  \[ E^{2}_{s,t} = \pi_{s}(P \otimes_{NA} \pi_{t-s}NC) \Rightarrow
  \pi_{t-s}(P \otimes_{NA} NC),\] which is multiplicative by
  Proposition~\ref{propn:filtalgE1}. Since taking the colimit of a
  sequnce is a left Quillen functor, $P$ is also a cofibrant
  replacement for $NB$ as an $NA$-module, so we can write this as
  \[ E^{2}_{s,t} = \pi_{s}(NB \otimesL_{NA} \pi_{t-s}NC) \Rightarrow
  \pi_{t-s}(NB \otimesL_{NA} NC).\] By Lemma~\ref{lem:dertens} we have
  natural quasi-isomorphisms $N B \otimesL_{NA} NC \to N(B
  \otimesL_{A} C)$ and, since $\pi_{t-s}NC \cong \pi_{t-s}C$ is
  concentrated in a single degree, $N B \otimesL_{NA} \pi_{t-s}NC \to
  N(B \otimesL_{A} \pi_{t-s}C)$. Thus we have a natural isomorphism
  \[ E^{2}_{s,t} \cong \pi_{s}(B \otimesL_{A} \pi_{t-s}C) \Rightarrow
  \pi_{t-s}(B \otimesL_{A} C).\qedhere\]
\end{proof}

As observed by Turner~\cite[Proof of Lemma 3.1]{Turner}, this spectral
sequence can be used to get a ``Serre spectral sequence'' for
cofibration sequences of simplicial commutative algebras:
\begin{cor}[``Serre Spectral Sequence'']\label{cor:SerreSpSeq}
  Suppose $f \colon A \to B$ is a cofibration of simplicial augmented
  graded commutative $\F$-algebras with cofibre $C$ and
  $\pi_{0}A = \F$. Then there is a multiplicative spectral sequence
  \[\pi_{s}(C) \otimes_{\F} \pi_{t-s}(A) \Rightarrow \pi_{t}(B). \]
\end{cor}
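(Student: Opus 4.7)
The plan is to deduce this from Corollary~\ref{cor:simpltensspseq} by taking the third algebra to be $A$ itself with the identity map. Applying that corollary to the maps $f \colon A \to B$ and $\id_A \colon A \to A$ produces a multiplicative spectral sequence
\[ E^{2}_{s,t} = \pi_{s}(B \otimesL_{A} \pi_{t-s}(A)) \Rightarrow \pi_{t}(B \otimesL_{A} A). \]
Since $A$ is a (trivially cofibrant) $A$-module, the derived tensor product $B \otimesL_{A} A$ is equivalent to $B \otimes_{A} A \cong B$, so the abutment is $\pi_{t}(B)$, with the correct multiplicative structure.

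Next I simplify the $E^{2}$-term. By Proposition~\ref{propn:degfiltE2}, the $A$-action on $\pi_{t-s}(A)$ is through the augmentation $A \to \pi_{0}(A) = \F$. Thus $\pi_{t-s}(A)$ is an $A$-module via $\F$, and because $\F$ is a field (so $\pi_{t-s}(A)$ is a free $\F$-module) I can factor
\[ B \otimesL_{A} \pi_{t-s}(A) \;\simeq\; (B \otimesL_{A} \F) \otimes_{\F} \pi_{t-s}(A). \]
Applying $\pi_{s}$ and using that $(-)\otimes_{\F} \pi_{t-s}(A)$ is exact therefore yields
\[ E^{2}_{s,t} = \pi_{s}(B \otimesL_{A} \F) \otimes_{\F} \pi_{t-s}(A). \]

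The final step is to identify $B \otimesL_{A} \F$ with the cofibre $C$. By definition $C$ is the pushout of $\F \leftarrow A \xrightarrow{f} B$ in simplicial augmented graded commutative $\F$-algebras, and because $f$ is a cofibration in a proper model category (Proposition~\ref{propn:ProperModCat}) this pushout is a homotopy pushout. Using an almost-free model for $f$, the map $A \to B$ becomes a levelwise inclusion $A_{n} \hookrightarrow A_{n} \otimes S(V_{n})$, so $B$ is a free $A$-module and the derived tensor product $B \otimesL_{A} \F$ is computed by the ordinary tensor product $B \otimes_{A} \F$, which is precisely $C$. (Equivalently, the bar construction $B \otimes A^{\otimes \bullet} \otimes \F$ defining $B \otimesL_{A} \F$ is a simplicial resolution of the pushout.) Substituting gives $E^{2}_{s,t} = \pi_{s}(C) \otimes_{\F} \pi_{t-s}(A)$, as claimed; multiplicativity is inherited directly from Corollary~\ref{cor:simpltensspseq}.

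The main obstacle is the identification $B \otimesL_{A} \F \simeq C$: one must check that the module-level derived tensor product used in Corollary~\ref{cor:simpltensspseq} agrees with the cofibre taken in the category of augmented simplicial commutative algebras. This is where the hypothesis that $f$ is a cofibration (and not just any map) is essential, and where Miller's description of cofibrations as retracts of almost-free morphisms provides the cleanest justification, since an almost-free extension is level-wise free as an $A$-module.
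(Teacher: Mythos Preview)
Your proof is correct and follows essentially the same route as the paper's: both apply Corollary~\ref{cor:simpltensspseq} with $C=A$, identify the abutment with $\pi_{*}B$, and then rewrite the $E^{2}$-term as $\pi_{s}C\otimes_{\F}\pi_{t-s}A$ using that $A\to B$ is a cofibration so that $B\otimes_{A}^{\mathbb L}\F\simeq B\otimes_{A}\F=C$. The paper justifies this last identification by invoking left properness (Proposition~\ref{propn:ProperModCat}), while you argue it directly from the almost-free description of cofibrations; the two arguments are interchangeable and the remaining steps are the same.
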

\begin{proof}
  By Corollary~\ref{cor:simpltensspseq} there is a multiplicative
  spectral sequence
  \[ E^{2}_{s,t} = \pi_{s}(B \otimesL_{A} \pi_{t-s}A) \Rightarrow
  \pi_{t}(B). \] By definition $C \cong B \otimes_{A} \F$, and
  so \[C \otimes_{\F} \pi_{t}A \cong (B \otimes_{A} \F) \otimes_{\F}
  \pi_{t}A \cong B \otimes_{A} \pi_{t}A,\] which is isomorphic to $B
  \otimesL_{A} \pi_{t}A$ since $A \to B$ is a cofibration and the
  model structure on simplicial commutative algebras is left proper by
  Proposition~\ref{propn:ProperModCat}. Since $\F$ is a field we have
  \[\pi_{s}(C \otimes_{\F} \pi_{t-s}A) \cong \pi_{s}C \otimes_{\F}
  \pi_{t-s}A,\] and so we can rewrite the $E^{2}$-term of the
  spectral sequence as $E^{2}_{s,t} \cong \pi_{s}(C) \otimes_{\F}
  \pi_{t-s}(A)$. 
\end{proof}

\section{Derived Functors of $U$}\label{sec:derU}
Our goal in this section is to compute $\pi_{*}U(M)$ where $M$ is a
simplicial unstable $A$-module. As a simplicial commutative algebra,
$U(M)$ depends only on the top non-zero Steenrod operations in $M$; in
\S\ref{subsec:restrvsp} we consider graded vector spaces equipped with
only these operations, which we call \emph{restricted vector spaces},
and observe that a simplicial restricted vector space decomposes up to
weak equivalence as a coproduct of simple pieces. In
\S\ref{subsec:ComppiU} we compute the derived functors of $U$ for
these simpler objects, which gives a description of $\pi_{*}U(M)$ as a
graded commutative algebra with higher divided square
operations. Using this we then give a more functorial description in
$\pi_{*}U(M)$ in \S\ref{subsec:functderU}, which in particular lets
us identify the action of the Steenrod operations.

\subsection{Restricted Vector Spaces}
\label{subsec:restrvsp}
In this subsection we define restricted vector spaces and make some
observations about their structure --- in particular, we show that a
chain complex of restricted vector spaces always decomposes up to
quasi-isomorphism as a direct sum of certain very simple complexes.

\begin{defn}
  A \defterm{restricted vector space} (over $\F$) is a non-negatively
  graded vector space $V$ equipped with linear maps $\phi_{i} \colon
  V^{i} \to V^{2i}$ for all $i$, called the \defterm{restriction maps}
  of $V$, such that $\phi_{0}\colon V^{0} \to V^{0}$ is the
  identity. A homomorphism of restricted vector spaces $f \colon V \to
  W$ is a homomorphism of graded vector spaces such that
  $\phi_{i}f^{i} = f^{2i}\phi_{i}$ for all $i$. We write $\Restr$ for
  the category of restricted vector spaces and restricted vector space
  homomorphisms. 
\end{defn}

\begin{defn}
  For $n \leq 0$, let $F(n)$ be the free restricted vector space with one generator
  $\iota_{n}$ in degree $n$. Thus $F(n)^{2^{r}n} = \F$
  with $\phi_{2^{r}n} = \id$ and $F(n)^{i} = 0$ otherwise --- in
  particular $F(0)$ is just $\F$ in degree $0$.

  For $k,n > 0$ let $T(n,k)$ be the nilpotent restricted
  vector space with one generator $\iota_{n,k}$ in degree $n$ subject
  to $\phi^{k}\iota_{n,k} = 0$; that is, $F(n)/\phi^{k}$. Thus
  $T(n,k)^{2^{r}n} = \F$ for $r = 0, \ldots, k$ with $\phi_{2^{r}n} = \id$ 
  for $r = 0,
  \ldots, k-1$, and $T(n,k)^{i}$ is $0$ otherwise.
\end{defn}

\begin{defn}
  Let $V$ be a restricted vector space. A \defterm{basis} $S$ of $V$
  consists of sets $S^{i}$ of elements of $V^{i}$ such that $S^{0}$ is
  a basis for $V^{0}$ and if $i = 2^{r}p$ with $p$ odd, then the set
  $(S^{2^{r}p} \cup \phi(S^{2^{r-1}p}) \cup \cdots \cup
  \phi^{r}(S^{p})) \setminus \{0\}$ is a basis for $V^{i}$.
\end{defn}

\begin{remark}
  It is clear that any restricted vector space has a basis, since we
  can inductively choose complements of $\phi(V_{i})$ in
  $V_{2i}$. Equivalently, any restricted vector space decomposes as a
  direct sum of $F(n)$'s and $T(n,k)$'s.
\end{remark}

\begin{defn}
  Let $C(q)$ be the non-negatively graded chain complex of restricted
  vector spaces
  \[ \cdots \to 0 \to 0 \to F(q),\]
  with $F(q)$ in degree $0$, and let $C(q, k)$ be the chain complex
  \[ \cdots \to 0 \to F(2^{k}q) \hookrightarrow F(q),\]
  with $F(q)$ in degree $0$ and $F(2^{k}q)$ in degree $1$.
\end{defn}

Given a chain complex $C$, write $C[n]$ for the suspended chain 
complex with $C[n]_i=C_{i-n}$.  Then clearly
\[ \HH_{*}(C(q)[n]) \cong
\begin{cases}
  F(q), & * = n, \\
  0, & * \neq n,
\end{cases}
\]
\[ \HH_{*}(C(q,k)[n]) \cong
\begin{cases}
  T(q,k), & * = n, \\
  0, & * \neq n.
\end{cases}
\]

\begin{propn}\label{propn:chaincxdecomp}
  Any chain complex of restricted vector spaces is quasi-isomorphic to
  a direct sum of $C(q)[n]$'s and $C(q, k)[n]$'s.
\end{propn}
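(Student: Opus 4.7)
The plan is to build a quasi-isomorphism from a sum of complexes of the required form into $V_\bullet$ by lifting a chosen basis of the homology, using the universal properties of $F(q)$ (as a free object on one generator in degree $q$) and of $C(q,k)$.

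First I observe that $H_{*}(V_\bullet)$ is itself a graded object in $\Restr$, since the differential is a map of restricted vector spaces and hence $\phi$ descends to homology. By the previous corollary, for each $n$ the homology $H_{n}(V_\bullet)$ admits a basis, that is, a decomposition as a direct sum of copies of $F(q)$'s and $T(q,k)$'s. Fix such a decomposition once and for all.

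Next I construct a chain map from a summand for each piece of the decomposition. For each summand $F(q) \subseteq H_{n}(V_\bullet)$, I pick a cycle $\tilde{x} \in V_{n}$ in internal degree $q$ representing the generator. Since $F(q)$ is the free restricted vector space on a generator in degree $q$, sending $\iota_{q} \mapsto \tilde{x}$ extends uniquely to a map of restricted vector spaces $F(q) \to Z_{n}(V_\bullet) \subseteq V_{n}$, namely $\phi^{r}\iota_{q} \mapsto \phi^{r}\tilde{x}$ (these are cycles because the differential commutes with $\phi$). This is the desired chain map $\Sigma^{n}C(q) \to V_\bullet$. For each summand $T(q,k) \subseteq H_{n}(V_\bullet)$, I again pick a cycle $\tilde{x} \in V_{n}$ representing the generator. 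Because $\phi^{k}[\tilde{x}] = 0$ in homology, we can choose $y \in V_{n+1}$ in internal degree $2^{k}q$ with $dy = \phi^{k}\tilde{x}$. The assignments $\iota_{q} \mapsto \tilde{x}$ on the degree-$0$ part and $\iota_{2^{k}q} \mapsto y$ on the degree-$1$ part extend by freeness to maps of restricted vector spaces $F(q) \to V_{n}$ and $F(2^{k}q) \to V_{n+1}$, and compatibility with the differential reduces to $d(\phi^{r}y) = \phi^{r}dy = \phi^{r+k}\tilde{x}$, which holds. This gives a chain map $\Sigma^{n}C(q,k) \to V_\bullet$.

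Summing over all basis pieces in every degree yields a map
\[ \Phi \colon \bigoplus_{i} \Sigma^{n_{i}}C(q_{i}) \;\oplus\; \bigoplus_{j} \Sigma^{m_{j}}C(q_{j}',k_{j}) \longrightarrow V_\bullet. \]
By construction $H_{*}(\Phi)$ sends the canonical generator of each summand $F(q_{i})$ or $T(q_{j}',k_{j})$ on the left to the chosen generator of the corresponding summand of $H_{*}(V_\bullet)$ on the right, so $H_{*}(\Phi)$ is precisely the isomorphism prescribed by the fixed decomposition. Hence $\Phi$ is a quasi-isomorphism, which proves the statement.

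The only point requiring care is verifying that the lifts I produce actually assemble into maps of \emph{restricted} vector spaces and not merely graded vector spaces, but this is handled uniformly by the observation that both $F(q)$ and the degree-$0$/degree-$1$ parts of $C(q,k)$ are free on one generator in $\Restr$, so each lift extends automatically. There is no obstruction issue for the bounding element $y$, since it exists precisely because $\phi^{k}[\tilde{x}]=0$; and no extension issue beyond a single generator is needed.
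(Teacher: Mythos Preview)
Your proof is correct and follows essentially the same approach as the paper's: choose a basis of the homology as a restricted vector space, lift each basis element to a cycle (and, in the nilpotent case, choose a bounding element for $\phi^{k}$ of the lift), and use freeness of $F(q)$ to extend these lifts to a chain map whose induced map on homology is the chosen basis decomposition. Your write-up is in fact slightly more careful than the paper's in spelling out why the lifts extend to maps of \emph{restricted} vector spaces and why the chain-map condition holds.
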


\begin{proof}
  Let $(V_{*}, d)$ be a chain complex of restricted vector
  spaces. Pick a basis $S_{i}$ of $\HH_{i}(V_{*})$. For $v \in
  S_{i}^{q}$ define $W_{v}$ to be $F(q)[i]$ if $\phi^{r}v$ is
  never zero, and $C(q, k)[i]$ if $\phi^{k}v = 0$ but
  $\phi^{r}v \neq 0$ for $r < k$. Let $\hat{v}$ be a lift of $v$ to
  $V_{i}$; in the first case $\hat{v}$ defines a map $\psi_{v} \colon
  W_{v} \to V_{\bullet}$. In the second case, since $\phi^{k}(v) = 0$
  we can pick $\hat{w} \in V_{i+1}$ such that $d(\hat{w}) =
  \phi^{k}(\hat{v})$; then $\hat{v}$ and $\hat{w}$ define a map
  $\psi_{v}$ from $W_{v}$ to $V_{\bullet}$. Let $W := \bigoplus_{v \in
    S}W_{v}$ and let $\psi \colon W \to V$ be $\bigoplus_{v \in S}
  \psi_{v}$. Then $\psi$ is a quasi-isomorphism, since it is clear
  that on homology $\psi_{v}$ induces the inclusion in
  $\HH_{i}(V_{\bullet}, d)$ of the subspace generated by $v$.
\end{proof}

By the Dold-Kan correspondence the category $\txt{Ch}(\Restr)_{\geq
  0}$ of non-negatively graded chain complexes of restricted vector
spaces is equivalent to the category of simplicial restricted vector
spaces. Let's write $K[n, q]$ and $K[n, q, k]$ for the simplicial
objects corresponding to $F(q)[n]$ and $C(q,k)[n]$, respectively,
under this equivalence; then Proposition~\ref{propn:chaincxdecomp}
corresponds to:
\begin{cor}\label{cor:RestrDecomp}
  Any simplicial restricted vector space is weakly homotopy equivalent
  to a coproduct of $K[n,q]$'s and $K[n,q,k]$'s.
\end{cor}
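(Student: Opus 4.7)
The plan is to combine the Dold–Kan correspondence with the preceding decomposition result for chain complexes of restricted vector spaces. Let $X$ be a simplicial restricted vector space and $N(X)$ its normalized chain complex, which lives in $\txt{Ch}(\Restr)_{\ge 0}$.

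First I would apply the previous proposition to $N(X)$ to obtain a quasi-isomorphism
\[
  N(X) \isoto \Bigl(\bigoplus_{\alpha} \Sigma^{n_{\alpha}}F(q_{\alpha})\Bigr) \oplus \Bigl(\bigoplus_{\beta} \Sigma^{n_{\beta}}C(q_{\beta},k_{\beta})\Bigr).
\]
Applying the Dold–Kan functor $K$ term by term sends $\Sigma^{n}F(q)$ to $K[n,q]$ and $\Sigma^{n}C(q,k)$ to $K[n,q,k]$ by definition, so the right-hand side is carried to the corresponding coproduct of $K[n,q]$'s and $K[n,q,k]$'s.

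Next I would check two compatibility points that turn this into a weak equivalence of simplicial objects. First, the equivalence of categories $K \colon \txt{Ch}(\Restr)_{\ge 0} \isoto \Fun(\simp^{\op},\Restr)$ preserves colimits, and in particular preserves the arbitrary direct sums appearing above; so the coproduct interpretation on the simplicial side is literally the image under $K$. Second, since $N$ and $K$ are mutually inverse and $\pi_{*}X = H_{*}N(X)$, a quasi-isomorphism in $\txt{Ch}(\Restr)_{\ge 0}$ corresponds exactly to a $\pi_{*}$-isomorphism, i.e.\ a weak equivalence, of simplicial restricted vector spaces (in other words, a weak homotopy equivalence on the underlying simplicial vector spaces, since the restriction maps are simply $\F$-linear maps compatible with the $\pi_{*}$-isomorphism).

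The only possible obstacle is the interaction of direct sums with the weak equivalence: one should note that in an abelian category direct sums are exact, so the displayed isomorphism on the right is automatically a coproduct of quasi-isomorphisms and hence a quasi-isomorphism, and homology (respectively homotopy) commutes with direct sums of chain complexes (respectively coproducts of simplicial objects). Thus the image under $K$ is a weak equivalence, which yields the desired coproduct decomposition.
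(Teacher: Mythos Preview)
Your argument is correct and is exactly the approach the paper intends: the corollary is stated there without proof, since it follows immediately by applying the Dold--Kan equivalence to the preceding proposition. One small quibble is that the quasi-isomorphism produced in that proposition actually runs from the direct sum into the given chain complex rather than in the direction you wrote, but this is harmless for the conclusion since ``weakly homotopy equivalent'' is symmetric.
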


\subsection{Computation of the Derived Functors of
  $U$}\label{subsec:ComppiU}

In this subsection we will prove the main technical result of this paper: we
compute the homotopy groups of the free unstable $A$-algebra on a
simplicial unstable $A$-module. As an algebra, $U(V)$ depends only on
$V$ as a restricted vector space: it is the ``enveloping algebra'' of $V$,
given by the free graded
commutative algebra on $V$ subject to the relation $x^{2} = \phi(x)$,
i.e. $S(V)/(x^{2} = \phi(x))$, where $S(V)$ is the graded symmetric
algebra on $V$. If $V$ is a simplicial restricted vector space we may
ask about $\pi_*(UV)$. It does not depend functorially on $\pi_*(V)$;
we do not have Dold's theorem \ref{thm:Dold} working for us. We will
describe  $\pi_*(UV)$ in terms of $\pi_*(V)$, but not functorially. 
Our description will use the functor $\mathfrak{S}$, given to us by
Dold's theorem, such that $\pi_*(SV)=\mathfrak{S}(\pi_*(V))$. 
It is described in detail above, in Theorem~\ref{thm:Bousfield-Dwyer}.
We will also use the ``loops'' functor $\Omega$ and its first derived functor
$\Omega_1$, defined by the exact sequence
  \[ 0 \to \Sigma \Omega_{1}V \to \Phi V \xto{\phi} V \to \Sigma
  \Omega V \to 0. \]
where $\Phi$ denotes the ``doubling'' functor, 
$(\Phi V)^{2q}_{n} = V^{q}_{n}$ and $(\Phi V)^{2q+1}_{n} = 0$.

Here is the result:

\begin{thm}\label{thm:piU}
  If $V$ is a simplicial restricted vector space, then there is a
  (non-canonical) isomorphism 
\[
\pi_{*}UV
  \cong U(\pi_{0}V)[0] \otimes \mathfrak{S}(\Sigma\Omega \pi_{*>0} V) \otimes
  \mathfrak{S}((\Sigma \Omega_{1} \pi_{*>0}V)[1]),
\] 
where $[1]$ denotes a shift by 1 in the simplicial degree. (By
\emph{non-canonical} we mean that the isomorphism depends on a choice
of basis of $\pi_{*}V$.)
  % \pi_{* > 0}\phi$ and $\ker \pi_{* > 0}\phi$ are the bigraded vector
  % spaces given by taking the cokernels and kernels of the restriction
  % maps in $\pi_{*}V$ degreewise, for $* > 0$
\end{thm}

By Corollary~\ref{cor:RestrDecomp} we know that any simplicial
restricted vector space is weakly equivalent to a coproduct of
$K[n,q]$'s and $K[n,q,k]$'s. To show that this carries over to a
decomposition of $U(M)$ up to weak equivalence, we observe $U$
preserves weak equivalences and colimits:
\begin{propn}\label{propn:Ucolim}
  $U$, considered as a functor from restricted vector spaces to graded
  commutative $\F$-algebras, preserves colimits and weak equivalences.
\end{propn}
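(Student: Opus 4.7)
The plan is to exhibit $U$ as a left adjoint, so that preservation of colimits becomes an instance of the standard fact that left adjoints commute with colimits.

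Concretely, I will construct a right adjoint $G$ from graded commutative $\F$-algebras to $\Restr$ as follows. Given a graded commutative $\F$-algebra $R$, set $G(R)^{i} := R^{i}$ for $i > 0$ and $G(R)^{0} := \{x \in R^{0} : x^{2} = x\}$ (the idempotents), with restriction maps $\phi_{i}(x) := x^{2}$. In positive degrees squaring takes values in $R^{2i} = G(R)^{2i}$, and in degree zero it restricts to the identity on idempotents, matching the axiom $\phi_{0} = \id$ of a restricted vector space.

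The adjunction $U \dashv G$ then follows directly from the presentation $U(V) = S(V)/(x^{2} - \phi(v))$. A graded commutative algebra homomorphism $U(V) \to R$ is the same data as a degree-preserving linear map $f \colon V \to R$ satisfying $f(v)^{2} = f(\phi(v))$ for all $v \in V$. This condition forces $f(v)$ to be idempotent when $v \in V^{0}$, so $f$ factors through $G(R)^{0}$ in degree zero, and in positive degrees it says exactly that $f$ commutes with the restriction maps. Together this produces a natural bijection
\[
\Hom(U(V), R) \cong \Hom_{\Restr}(V, G(R)),
\]
so $U$ is a left adjoint and therefore preserves all colimits.

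The only point requiring a moment's thought is the degree-zero behavior: to make $\phi_{0}$ equal the identity on $G(R)^{0}$ one must cut down to the idempotent subring of $R^{0}$, but this is forced by the definitions and everything else is a formal verification using the universal property of $S(V)$ followed by the defining relations of $U(V)$.
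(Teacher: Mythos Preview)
Your argument is correct and follows the same overall strategy as the paper: show that $U$ is a left adjoint. The paper does this by first lifting $U$ to a functor $U'$ into \emph{augmented} graded commutative $\F$-algebras, observing that the augmentation ideal (with squaring as restriction) is right adjoint to $U'$, and then noting that the forgetful functor from augmented algebras to algebras preserves colimits. You instead construct a right adjoint $G$ directly at the level of non-augmented algebras, handling degree~$0$ by cutting down to the idempotents so that $\phi_{0}=\id$ is satisfied. This is a pleasant variant: it avoids the detour through augmented objects at the cost of the small extra check that the idempotents in $R^{0}$ form an $\F_{2}$-subspace (which they do, since $(x+y)^{2}=x^{2}+y^{2}$ in characteristic~$2$). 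Either route yields the result with the same amount of work.
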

\begin{proof}
  We first show that $U$ preserves colimits. Let $U'$ denote $U$,
  regarded as a functor from restricted vector spaces to augmented
  graded commutative $\F$-algebras. The forgetful functor from
  augmented algebras to algebras preserves colimits, so it suffices to
  show that $U'$ preserves colimits. But this is clear since $U'$ has
  a right adjoint, namely the augmentation ideal functor for augmented
  graded commutative $\F$-algebras, regarded as a functor to
  restricted vector spaces with restriction maps given by squaring.

  To see that $U$ preserves weak equivalences, consider the
  word-length filtration on $U(V)$ for $V$ a simplicial restricted
  vector space. This gives rise to a spectral sequence of the form
  \[ \pi_{t}E_{s}(V) = \mathfrak{E}_{s}(\pi_{*}V)_{t}\Rightarrow
  \pi_{t} U(V),\] where $\mathfrak{E}$ is as in \S\ref{subsec:ops}
  Moreover, since the filtration is natural in $V$, so is the spectral
  sequence. Thus a weak equivalence $f \colon V \to W$ of simplicial restricted vector
  spaces induces a morphism of spectral sequences that gives an
  isomorphism on the $E^{1}$-page, since this only depends on the
  homotopy of the simplicial restricted vector space.  This implies
  that the map is an isomorphism of spectral sequences and hence, as
  these spectral sequences converge, it follows that $U(f)$ is a weak
  equivalence of simplicial graded commutative algebras.
\end{proof}
Combining this with Corollary~\ref{cor:RestrDecomp} we see that
$U(M)$, for any simplicial restricted vector space $M$, is weakly
equivalent to a tensor product of $U(K[n,q])$'s and
$U(K[n,q,k])$'s. It thus suffices to prove Theorem~\ref{thm:piU} in
these two cases. We begin with the easiest case, namely
$\pi_{*}U(K[n,q])$ for $q > 0$. For this we need to recall the
explicit form of the Dold-Kan construction:
\begin{defn}
  Let $\mathbf{C}$ be an abelian category. The \defterm{Dold-Kan
    construction} \[K \colon \txt{Ch}(\mathbf{C})_{\geq 0} \to
  \Fun(\simp^{\op}, \mathbf{C})\] sends a non-negatively graded chain
  complex $A$ to the simplicial object $K(A)$ defined as follows: We
  set \[K(A)_{n} = \bigoplus_{\alpha \colon [n] \twoheadrightarrow
    [k]} A_{k},\] where the coproduct is over surjective maps out of
  $[n]$. Then a map $K(A)_{n} \to K(A)_{m}$ is described by a
  ``matrix'' of maps $f_{\alpha,\beta}\colon A_{k} \to A_{l}$ from the
  component corresponding to $\alpha \colon [n] \twoheadrightarrow
  [k]$ to that corresponding to $\beta \colon [m] \twoheadrightarrow
  [l]$. To define the map $\phi^{*} \colon K(A)_{n} \to K(A)_{m}$
  corresponding to $\phi \colon [m] \to [n]$ in $\simp$ we take this
  to be given by
  \[ f_{\alpha,\beta} :=
  \begin{cases}
    \id, & l=k, \beta = \alpha \phi \\
    d, & l=k-1, d^{0}\beta = \alpha \phi\\
    0, & \text{otherwise}.
  \end{cases}
  \]
\end{defn}
The Dold-Kan correspondence (cf. Dold~\cite{DoldSymmProd},
Dold-Puppe~\cite{DoldPuppeNichtFtr}, Kan~\cite{KanFtrsCSS}) is then
that the functor $K$ is an equivalence of categories, with inverse the
normalized chain complex functor.

\begin{lemma}\label{lem:UKnq}
  Let $\F[n,q]$ denote the chain complex of graded vector spaces that
  is $0$ except in the degree $n$, where it is $\F[q]$ (the graded
  vector space with $\F$ in degree $q$ and $0$ elsewhere). Then for $q> 0$
  we have $UK[n,q] \cong S(K\F[n,q])$, where $K\F[n,q]$ is the
  Dold-Kan construction for graded vector spaces applied to $\F[n,q]$.
  In particular, we have an isomorphism
  \[\pi_{*}UK[n,q] \cong \mathfrak{S}\F[n,q]\]
  for all $n$ and $q > 0$.
\end{lemma}
\begin{proof}
  From the definition of the Dold-Kan functor $K$ we have
\[ K[n,q]_{i} =  \bigoplus_{[i] \twoheadrightarrow [n]}
F(q) \]
and so for $q > 0$ we have
\[ UK[n,q]_{i} \cong \bigotimes_{[i] \twoheadrightarrow [n]} UF(q) \cong
\bigotimes_{[i] \twoheadrightarrow [n]} S\F[q] \cong S(\bigoplus_{[i]
  \twoheadrightarrow [n]} \F[q]) \cong S(K\F[n,q])_{i}. \] Moreover, the
simplicial structure maps in $UK[n,q]$ and $SK\F[n,q]$ are also
clearly the same (on ``components'' they are either the identity
or zero), so these simplicial graded vector spaces are isomorphic.
%giving
%\[ \pi_{*}UK[n,q] \cong \pi_{*}SK\F[n,q] \cong
%\mathfrak{S}(\pi_{*}K\F[n,q]) \cong \mathfrak{S}(\F[n,q]).\qedhere\]
\end{proof}

For the case $q = 0$ the algebra $U F(0)$ is not a symmetric algebra,
since we impose the relation $x^{2} = x$ on the generator $x$: it is a
{\em Boolean algebra}. 
 If $V$ is a vector space, we write $b(V)$ for the free
  Boolean algebra $s(V)/(x^{2} = x)$ on $V$ (where $s(V)$ is the
  ungraded symmetric algebra on $V$). By Theorem~\ref{thm:Dold}, there
  is a functor $\mathfrak{b} \colon \grV \to \grV$ such that
  $\pi_{*}b(V) = \mathfrak{b}(\pi_{*}V)$ for $V$ a simplicial vector
  space.

\begin{lemma}\label{lem:BoolHtpy}
  Suppose $V$ is a graded $\F$-vector space. Then
  \[ \mathfrak{b}(V)_{*} =
  \begin{cases}
    b(V_{0}), & * = 0, \\
    0, & \txt{otherwise.}
  \end{cases}\]
\end{lemma}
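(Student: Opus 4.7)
The plan is to reduce by tensor decomposition to the case $W = \F[n]$ for each $n \ge 0$, then handle the two base cases separately. Since $b$ is left adjoint to the forgetful functor from commutative Boolean $\F$-algebras to vector spaces, it preserves colimits; coproducts of Boolean algebras are tensor products (the relation $x^{2} = x$ is preserved under $\otimes$ in characteristic $2$). Thus $b(V \oplus V') \cong b(V) \otimes b(V')$ as simplicial algebras, and combined with K\"unneth over $\F$ this yields $\mathfrak{b}(W \oplus W') \cong \mathfrak{b}(W) \otimes \mathfrak{b}(W')$ in graded vector spaces. Decomposing $W = \bigoplus_{\alpha} \F[n_{\alpha}]$, it suffices to compute $\mathfrak{b}(\F[n])$ for each $n$.

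For $n = 0$ I would take $V$ to be the constant simplicial vector space with value $\F$; then $b(V)$ is the constant simplicial algebra $b(\F) = \F[t]/(t^{2}-t) \cong \F \times \F$, giving $\mathfrak{b}(\F[0]) = b(\F) = b(W_{0})$ concentrated in degree $0$. For $n > 0$ I take $V = K(\F[n])$, the simplicial vector space corresponding to $\F[n]$ under Dold--Kan, so $\pi_{0}V = 0$ and $\pi_{n}V = \F$ generated by some $\iota$; the goal is to show $\pi_{*}b(V) = \F$ concentrated in degree $0$. Following the strategy of Proposition~\ref{propn:Ulengthfilt}, I would filter $b(V)$ by word length, setting $F_{p}b(V)$ to be the image of $F_{p}s(V)$; this is well-defined because the Boolean relation $v^{2} = v$ strictly lowers length. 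The associated graded is the free exterior algebra $e(V)$, so the resulting spectral sequence has
\[ E^{1}_{s,t} = \pi_{t}e_{s}(V) = \mathfrak{e}_{s}(\F[n])_{t}, \]
the weight-$s$ part of the exterior algebra on $\iota$ and $\delta_{I}\iota$ for admissible $I$ with $i_{k} \ge 1$ and excess $\le n$.

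The crucial input is that the chain-level boundary formula $\partial \delta_{1}\tilde{v} = \tilde{v}^{2} + \delta_{1}\partial\tilde{v}$ becomes, after substituting the Boolean relation $\tilde{v}^{2} = \tilde{v}$ in $b(V)$, the identity $\partial \delta_{1}\tilde{v} = \tilde{v} + \delta_{1}\partial\tilde{v}$. This descends to the spectral sequence as $d^{1}\delta_{1}v = v + \delta_{1}d^{1}v$, the exact analog of the differential $d^{1}\delta_{1}v = \phi(v) + \delta_{1}d^{1}v$ used in the proof of Proposition~\ref{propn:Ulengthfilt}, with $\phi = \id$. Since $\id$ has trivial kernel and cokernel, the cascading argument of that proof kills every generator: $d^{1}\delta_{1}\iota = \iota$ kills both $\iota$ and $\delta_{1}\iota$ at $E^{2}$, and by Lemma~\ref{lem:diffdeltaI} iterated classes $\delta_{I}\iota$ and $\delta_{I}\delta_{1}\iota$ are successively eliminated via differentials involving $\delta_{I_{m+1}}\iota$-factors. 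Since $\pi_{0}V = 0$, there is no surviving $E(\pi_{0}V)$ contribution, so $E^{\infty} = \F$ and $\mathfrak{b}(\F[n]) = \F$ in degree $0$ for $n > 0$.

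Combining, $\mathfrak{b}(W) \cong \bigotimes_{\alpha}\mathfrak{b}(\F[n_{\alpha}])$ equals $b(\F)^{\otimes \dim W_{0}} = b(W_{0})$ in degree $0$ and vanishes in positive degrees. The main obstacle will be making the cascading spectral sequence argument rigorous in the Boolean setting; this is essentially the same bookkeeping as in Proposition~\ref{propn:Ulengthfilt}, simplified by the fact that $\phi = \id$ is an isomorphism, so the only class that can survive to $E^{\infty}$ is the unit.
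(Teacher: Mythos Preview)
Your argument is correct and its core is identical to the paper's: both filter $b(V)$ by word length, identify the associated graded with $e(V)$, and use the formula $d^{1}\delta_{1}x = x^{2} = x$ (the Boolean relation) together with the compatibility of the $\delta_{i}$'s with the differentials to see that every generator in positive homotopical degree dies. The only difference is that you first reduce to $W = \F[n]$ via a tensor decomposition, whereas the paper runs the spectral sequence argument directly for an arbitrary simplicial vector space $X$; your reduction is harmless but unnecessary, since the argument works uniformly for all $x \in \pi_{>0}X$ at once.
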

\begin{proof}
  Suppose $V$ is a simplicial vector space. Then
  Theorem~\ref{thm:deltaprops} implies that for any element $a$ in
  degree $n > 0$ in the chain complex associated to $b(V)$ such that $da
  =0$, there exists an element $\delta_{1}a$ in degree $n+1$ such that $d\delta_{1}a =
  \phi(a) = a$. Thus $\pi_{*}b(V) = 0$ for $* > 0$.
\end{proof}

\begin{lemma}\label{lem:piUKnzero}
  For any $n$ we have an isomorphism $UK[n,0] \cong b(K\F[n])[0]$, the 
  simplicial graded vector space with $b(K\F[n])$ in degree 0, and so \[\pi_{*}UK[n,0] =
  \begin{cases}
    (b(\F))[0], & n=0, \\
    b(0)[0] \cong \F[0], & n \neq 0.
  \end{cases}\]
\end{lemma}
\begin{proof}
  As in the proof of Lemma~\ref{lem:UKnq} we have
  \[ UK[n,0]_{i} \cong \bigotimes_{[i] \twoheadrightarrow [n]} UF(0) \cong
  \bigotimes_{[i] \twoheadrightarrow [n]} b(\F)[0] \cong b\left(\bigoplus_{[i]
    \twoheadrightarrow [n]} \F\right)[0] \cong b(K\F[n])[0]_{i}, \] and the
  simplicial structure maps are again the same. %  Thus we get
  % $\pi_{*}UK[n,0] \cong \pi_{*}b(K\F[n])[0] \cong
  % \mathfrak{b}(\F[n])[0]$, which gives the result by
  % Lemma~\ref{lem:BoolHtpy}.
\end{proof}

Now we consider the consider the more complicated case, namely
$UK[n,q,k]$.  There is a cofibration sequence
\[ F(q)[n] \to C(q,k)[n] \to F(2^{k}q)[n+1] \] of chain complexes of
restricted vector spaces.  By the Dold-Kan correspondence this gives a
cofibration sequence
\[ K[n,q] \to K[n,q,k] \to K[n+1,2^{k}q] \]
of simplicial restricted vector spaces, and so a cofibration sequence
\[ U(K[n,q]) \to U(K[n,q,k]) \to U(K[n+1,2^{k}q])\] of simplicial
commutative $\F$-algebras by Proposition~\ref{propn:Ucolim}. We want
to apply the ``Serre spectral sequence'' of
Corollary~\ref{cor:SerreSpSeq} to this cofibration sequence to compute
$\pi_{*}U(K[n,q,k])$; in order to do this we first observe that the
map $U(K[n,q]) \to U(K[n,q,k])$ is a cofibration of simplicial graded
commutative algebras:
\begin{lemma}
  The map $U(K[n,q]) \to U(K[n,q,k])$ is almost-free in the sense of
  Definition~\ref{defn:almost}, and so is a cofibration of
  simplicial graded commutative algebras.
\end{lemma}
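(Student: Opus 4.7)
The plan is to exhibit $U(K[n,q]) \to U(K[n,q,k])$ as an almost-free morphism by identifying the new polynomial generators in each simplicial level. Under Dold-Kan, the cofibration sequence $\Sigma^{n} F(q) \to \Sigma^{n}C(q,k) \to \Sigma^{n+1} F(2^{k}q)$ yields, in each simplicial degree, a direct sum decomposition of restricted vector spaces
\[ K[n,q,k]_{i} \cong K[n,q]_{i} \oplus W_{i}, \quad W_{i} := \bigoplus_{\beta \colon [i] \twoheadrightarrow [n+1]} F(2^{k}q). \]
The crucial observation is that for every face map $d_{l}$ with $l \ge 1$ and every degeneracy $s_{l}$, the induced simplicial structure map respects this decomposition. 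Indeed, in the Dold-Kan formula a cross term from the $F(2^{k}q)$-component indexed by $\beta$ to an $F(q)$-component would require the epi-mono factorization of $\beta \circ \phi$ to have mono part $d^{0} \colon [n] \to [n+1]$, i.e., $\beta \circ \phi$ must be a surjection onto $[n+1]$ that misses exactly $0$. Since $\beta$ is order-preserving and surjective onto $[n+1]$, a short check shows this can only occur for $\phi = d_{0}$.

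By Proposition~\ref{propn:Ucolim}, $U$ preserves coproducts, and so
\[ U(K[n,q,k])_{i} \cong U(K[n,q])_{i} \otimes U(W_{i}) \cong U(K[n,q])_{i} \otimes \bigotimes_{\beta} \F[y_{\beta}], \]
where $y_{\beta}$ is the bottom generator of the summand $F(2^{k}q)$ indexed by $\beta$, of degree $2^{k}q > 0$. Define $V_{i} \subseteq IU(K[n,q,k])_{i}$ to be the subspace spanned by the $y_{\beta}$; then $S(V_{i})$ is the polynomial algebra $\bigotimes_{\beta} \F[y_{\beta}]$, so the inclusion $V_{i} \hookrightarrow U(W_{i})$ extends to an isomorphism $S(V_{i}) \isoto U(W_{i})$, yielding the required isomorphism
\[ U(K[n,q])_{i} \otimes S(V_{i}) \isoto U(K[n,q,k])_{i}. \]

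Finally, we define $\delta_{l} \colon V_{i} \to V_{i-1}$ for $l \geq 1$ by $\delta_{l}(y_{\beta}) = y_{\beta \circ d^{l}}$ if $\beta \circ d^{l}$ remains surjective onto $[n+1]$ and $0$ otherwise, and $\sigma_{l} \colon V_{i} \to V_{i+1}$ for $l \geq 0$ by $\sigma_{l}(y_{\beta}) = y_{\beta \circ s^{l}}$ (which is always surjective). The commutativity of the almost-free diagrams then follows from the observation above: on the subspace $V_{i}$ these are exactly the simplicial operators of $W_{i}$ restricted to the bottom generators, and the face/degeneracy maps on $U(K[n,q,k])_{i}$ are algebra homomorphisms that therefore respect the tensor decomposition. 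The principal technical step is the Dold-Kan check that the non-$d_{0}$ simplicial operators preserve the direct-sum decomposition; once that is in place, the almost-free structure is essentially forced, and the cofibration conclusion then follows from Miller's theorem.
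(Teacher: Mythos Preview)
Your proof is correct and follows essentially the same route as the paper's: both identify the levelwise splitting $K[n,q,k]_{i}\cong K[n,q]_{i}\oplus W_{i}$ with $W_{i}=\bigoplus_{\beta\colon[i]\twoheadrightarrow[n+1]}F(2^{k}q)$, set $V_{i}=\bigoplus_{\beta}\F[2^{k}q]$, and then use the Dold-Kan matrix description to check that a cross term $W_{i}\to K[n,q]_{i-1}$ would force $\beta\phi=d^{0}\gamma$, which is impossible for $\phi\in\{d^{1},\dots,d^{i},s^{0},\dots,s^{i}\}$ since these all hit $0$ and $\beta(0)=0$. The paper spells out that last step (``$\beta\psi$ hits $0$, hence is not of the form $d^{0}\gamma$'') where you write ``a short check shows''; otherwise the arguments coincide.
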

\begin{proof}
  We have
  \[ K[n,q,k]_{i} = \bigoplus_{[i] \twoheadrightarrow [n]} F(q) \oplus
  \bigoplus_{[i] \twoheadrightarrow [n+1]} F(2^{k}q)\] and
  $UK[n,q,k]_{i} \cong UK[n,q]_{i} \otimes S(V_{i})$ where $V_{i} =
  \bigoplus_{[i] \twoheadrightarrow [n+1]} \F[2^{k}q]$. It is clear
  that the simplicial structure maps $\psi^{*}$ take $UK[q,n]_{*}$ to
  itself, and are induced by maps between the $V_{n}$'s except when
  $\psi$ is such that for some $\beta$ and $\gamma$ we have $\beta
  \psi = d^{0}\gamma$, since this is the case when the differential
  in the chain complex occurs in the definition of $\psi^{*}$ for
  $K[n,q,k]$.

  But the map $\beta \colon [i] \to [n+1]$ is surjective and
  order-preserving, so it must send $0$ to $0$. Thus $\beta \psi$ will hit
  $0$ in $[n+1]$ for all $\beta$ if $\psi$ hits $0$ in $[i]$, in which
  case $\beta \psi$ cannot be of the form $d^{0}\gamma$. This is
  clearly the case for the degeneracies $s^{j} \colon [i+1] \to [i]$
  for all $j$ (as they are surjective) and the face maps $d^{j} \colon
  [i-1] \to [i]$ for $j \neq 0$. Thus the structure maps of
  $U(K[n,q,k])$ corresponding to all degeneracies and all face maps
  other than $d_{0}$ are induced from maps between the $V_{i}$.
\end{proof}

\begin{propn}\label{propn:UKnqkcollapse}
  The ``Serre spectral sequence'' for the cofibration sequence
  \[ U(K[n,q]) \xto{\alpha} U(K[n,q,k]) \xto{\beta} U(K[n+1,2^{k}q])\]
  collapses at the $E^{2}$-page.
\end{propn}
\begin{proof}
  By Corollary~\ref{cor:SerreSpSeq} the spectral sequence in question
  is a multiplicative spectral sequence of the form 
  \[ E^{2}_{s,t} = \pi_{s}(UK[n+1,2^{k}q]) \otimes \pi_{t-s}(UK[n,q])
  \Rightarrow \pi_{t}(UK[n,q,k]),\] with differentials $d^{r} \colon
  E^{r}_{s,t} \to E^{r}_{s-r,t-1}$. Write $\iota$ for the fundamental
  class in $\pi_{n}K[n,q] = E^{2}_{0,n}$ and $\kappa$ for the fundamental class in
  $\pi_{n+1}K[n+1,2^{k}q] = E^{2}_{n+1,n+1}$. Since the spectral
  sequence is multiplicative, it suffices to show that there are no
  non-zero differentials on the classes $\delta_{I}\iota$ and
  $\delta_{I}\kappa$. Clearly there are no possible non-zero
  differentials on $\delta_{I}\iota$, and the only differential on
  $\kappa$ that hits a non-zero group is $d^{n+1}$ --- but
  $d^{n+1}\kappa$ cannot be $\iota$ since they differ in internal
  grading. Moreover, the groups that might support differentials
  hitting $\kappa$ are all zero, so $\kappa$ must survive to $E^{\infty}$. 

  There is an obvious map of cofibration sequences from \[U(K[n,q]) \to
  U(K[n,q,k]) \to U(K[n+1,2^{k}q])\] to \[\F[0] \to U(K[n+1,2^{k}q])
  \xto{\id} U(K[n+1,2^{k}q]),\] and the map $\F[0] \to
  U(K[n+1,2^{k}q])$ is a cofibration since $U(K[n+1,2^{k}q])$ is
  free. Thus we get a morphism of spectral sequences, given on the
  $E^{2}$-page by projection to $\pi_{*}U(K[n+1,2^{k}q])$ and on the
  $E^{\infty}$-page by $\pi_{*}\beta$. This means that $\pi_{*}\beta$ must send
  $\kappa$ to the fundamental class $\kappa'$ in $\pi_{*}U(K[n+1,2^{k}q])$, and
  so for any admissible sequence $I$ the class $\delta_{I}\kappa$ is
  mapped to $\delta_{I}\kappa'$, which is
  non-zero. This implies that $\delta_{I}\kappa$ must also survive to
  $E^{\infty}$ for all $I$. By multiplicativity, this means the
  spectral sequence has no non-zero differentials, i.e. it collapses
  on the $E^{2}$-page.
\end{proof}

\begin{cor}\label{cor:piUKnqk}
  There is an isomorphism \[\pi_{*}U(K[n,q,k]) \cong
  \mathfrak{S}\F[n,q] \otimes \mathfrak{S}\F[n+1,2^{k}q]\] of algebras
  over the triple $\mathfrak{S}$.
\end{cor}
\begin{proof}
  We saw in the proof of Proposition~\ref{propn:UKnqkcollapse} that
  the map $\pi_{*}\beta \colon \pi_{*}UK[n,q,k] \to
  \pi_{*}UK[n+1,2^{k}q]$ is surjective. Since $\pi_{*}UK[n+1,2^{k}q]$
  is free, choosing a pre-image of the generator gives a map
  $\pi_{*}UK[n+1,2^{k}q] \to \pi_{*}UK[n,q,k]$ of
  $\mathfrak{S}$-algebras. Since the tensor product is the coproduct,
  we get a map
  \[\pi_{*}UK[n,q] \otimes \pi_{*}UK[n+1,2^{k}q] \to \pi_{*}UK[n,q,k]\] of
  $\mathfrak{S}$-algebras. Filter the left-hand side by degree and the
  right-hand side by the filtration from the ``Serre spectral
  sequence''. The collapse of this spectral sequence implies that this
  gives an isomorphism of the graded objects associated to the
  filtration, hence this map is an isomorphism of bigraded vector
  spaces and so also an isomorphism of $\mathfrak{S}$-algebras.
\end{proof}

Combining Lemmas~\ref{lem:UKnq} and \ref{lem:piUKnzero} and
Corollary~\ref{cor:piUKnqk} now completes the proof of
Theorem~\ref{thm:piU}. Applying this to the $E_{2}$-term of our
spectral sequence, we deduce the following:
\begin{cor}\label{cor:E2page}\ 
  \begin{enumerate}[(i)]
  \item If $E$ is a connected spectrum of finite type, the
    $E_{2}$-term of the spectral sequence for
    $\HH^{*}\Omega^{\infty}E$ is of the form
  \[ E_{2} \cong UD(\HH^{*}E)[0] \otimes
  \mathfrak{S}(\Sigma\Omega\mathbb{L}_{*>0}D(\HH^{*}E)) \otimes
  \mathfrak{S}(\Sigma\Omega_{1}\mathbb{L}_{*>0}D(\HH^{*}E)[1]).\]
\item If in addition the top squares in $\mathbb{L}_{*}D(\HH^{*}E))$
  are all zero for $* > 0$, then the $E_{2}$-term is given by
  \[ E_{2} \cong UD(\HH^{*}E)[0] \otimes
  \mathfrak{E}(\mathbb{L}_{*>0}D(\HH^{*}E)).\]
  \end{enumerate}
\end{cor}

\subsection{A Functorial Description of the Derived Functors}\label{subsec:functderU}
The description of $\pi_{*}UM$ for a simplicial restricted vector
space we obtained above is compatible with the products and
$\delta$-operations. However, in the case we're interested in $M$ is
the underlying simplicial restricted vector space of a simplicial
unstable $A$-module --- this means that there are also Steenrod
operations on $\pi_{*}UM$. We will now give a more functorial
description of $\pi_{*}UM$ that is also compatible with these
operations.

If $M$ is a simplicial unstable $A$-module, we have a natural
transformation $M \to \Sigma \Omega M$, which induces a map $\pi_{*}UM
\to \pi_{*}U\Sigma\Omega M$. Since the top squares in $\Sigma \Omega
M$ are all zero, as a simplicial commutative algebra $U \Sigma \Omega
M$ is isomorphic to $E \Sigma \Omega M$, and hence
$\pi_{*}U\Sigma\Omega M$ is isomorphic to $\mathfrak{E}(\pi_{*}\Sigma
\Omega M)$. We can also easily describe the action of the Steenrod
operations here, using the following observation of Dwyer:
\begin{propn}[Dwyer, {\cite[Proposition 2.7]{DwyerDivSqSpSeq}}]
\label{propn:SqDelta}
  Let $R$ be a simplicial unstable algebra over the Steenrod algebra;
  then $\pi_{*}R$ supports both higher divided squares and Steenrod
  operations. These are related as follows:
  \[ \Sq^{k}\delta_{i} =
  \begin{cases}
    0,& \text{$k$ odd,} \\
    \delta_{i} \Sq^{k/2}, & \text{$k$ even,}
  \end{cases}
  \]
  for $i \geq 2$. Moreover, if all squares in $R$ are zero, then the
  same is true for $i = 1$.
\end{propn}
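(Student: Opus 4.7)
The plan is to verify the identity at the chain level using the explicit formula $\delta_i = \mu \circ \gamma_i$ from \S\ref{subsec:ops}, with $\gamma_i(a) = \rho D^{n-i}(a \otimes a) + \rho D^{n-i-1}(a \otimes \partial a)$, and to exploit the Cartan formula for the level-wise Steenrod operations.

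First I would note that since $R$ is a simplicial unstable algebra, the Steenrod operations act on each $R_n$ commuting with face and degeneracy maps, hence with $N$; they commute with the multiplication $\mu$ via the Cartan formula $\Sq^k(xy) = \sum_{p+q=k}\Sq^p x \cdot \Sq^q y$; and since $\rho$ and each $D^m$ are natural in simplicial $\F$-vector spaces, the operators $\Sq^p \otimes \Sq^q$ (which together compose $\Sq^k$ on the tensor square, by Cartan) pass through $\rho D^m$. For a cycle $a \in N(R)_n$ representing $[a] \in \pi_n R$, using $\partial a = 0$ (so that the correction term $\rho D^{n-i-1}(a \otimes \partial \Sq^q a)$ drops out), I would arrive at
\[\Sq^k \delta_i[a] = \mu \rho D^{n-i}\Bigl(\sum_{p+q=k} \Sq^p a \otimes \Sq^q a\Bigr).\]

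Second, I would split this sum into the diagonal (present only for $k$ even, with $p = q = k/2$) and off-diagonal pairs $\{p, q\}$ with $p \neq q$. Each off-diagonal pair contributes $\mu \rho (D^{n-i} + T D^{n-i} T)(\Sq^p a \otimes \Sq^q a)$ after applying $\rho T = \rho$. For $2 \leq i \leq n - 1$, Theorem~\ref{thm:D}(ii) (with $k$ replaced by $n - i - 1$, rearranged in characteristic 2) gives
\[D^{n-i} + T D^{n-i} T = D^{n-i-1}\partial + \partial D^{n-i-1} + \phi_{n-i}.\]
Evaluating on $\Sq^p a \otimes \Sq^q a$: the $D^{n-i-1}\partial$ term vanishes since $a$ is a cycle and the Steenrod operations commute with $\partial$; the $\phi_{n-i}$ term vanishes since both tensor factors have simplicial degree $n \neq n - i$ (as $i \geq 1$); and the $\partial D^{n-i-1}$ term contributes a boundary, which dies in $\pi_*$ after applying the chain map $\mu$. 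For $k$ odd there is then no diagonal term, so $\Sq^k \delta_i [a] = 0$, while for $k$ even the diagonal term $\mu \rho D^{n-i}(\Sq^{k/2}a \otimes \Sq^{k/2}a) = \delta_i \Sq^{k/2}[a]$ (since $\Sq^{k/2}a$ is also a cycle), as required.

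The main obstacle is the top operation $\delta_n$, where Theorem~\ref{thm:D}(ii) with index $n-i-1 = -1$ is not available. Here one must instead use Theorem~\ref{thm:D}(i) to rewrite $D^0 + T D^0 T = D + \phi_0$ in characteristic 2; the $\phi_0$ term vanishes for $n \geq 1$, and the off-diagonal pair reduces to the chain-level product $\mu \rho D(\Sq^p a \otimes \Sq^q a)$, which represents $[\Sq^p a]\cdot[\Sq^q a]$ in $\pi_{2n}R$ rather than a boundary. Showing that these cross-products assemble into zero in $\pi_{2n}R$ requires exploiting the unstable $A$-algebra structure on $\pi_*R$ together with Corollary~\ref{cor:sqzero} (squares vanish in positive simplicial degree); this is the step that requires the most care and, I expect, the right way to handle it is to invoke the universal case of $U(M)$ for a simplicial unstable $A$-module $M$, where the computation of $\pi_*U(M)$ in \S\ref{subsec:ComppiU} constrains the possible form of the answer.
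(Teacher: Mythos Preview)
Your chain-level argument for $2 \leq i < n$ is correct but far more elaborate than the paper's proof, which is essentially one line. The paper observes that the total square $\Sq = \sum_k \Sq^k \colon R \to R$ is an algebra homomorphism by the Cartan formula, and since the operation $\delta_i$ is natural with respect to maps of simplicial commutative $\F$-algebras this gives $\Sq\,\delta_i = \delta_i\,\Sq$ on $\pi_*R$; extracting the component in each internal degree (using that $\delta_i$ doubles internal degree) then yields the stated identity. Your argument is an explicit unpacking of this: the Cartan decomposition is precisely what makes $\Sq$ multiplicative, and your manipulations with Theorem~\ref{thm:D} amount to re-deriving by hand the naturality of $\gamma_i$ under the simplicial linear map $\Sq$. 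The paper's route is shorter and makes clear that nothing special about unstable algebras is being used beyond the Cartan formula; your route has the virtue of being fully explicit at the chain level.

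Your concern about the top operation $\delta_n$ is legitimate, but the proposed fix --- passing to the universal case $U(M)$ --- does not work: the products $[\Sq^p a]\cdot[\Sq^q a]$ for $p \neq q$ are typically nonzero exterior classes even in $\pi_*U(M)$, so the universal case cannot force them to vanish. In fact the same cross terms arise if one extracts homogeneous parts carefully in the paper's argument, because $\delta_n$ is not additive: one has
\[
\delta_n\bigl(\textstyle\sum_j \Sq^j x\bigr) = \sum_j \delta_n(\Sq^j x) + \sum_{j<j'} (\Sq^j x)(\Sq^{j'}x),
\]
so the honest identity for $i = n$ reads $\Sq^k\delta_n x = \delta_n\Sq^{k/2}x + \sum_{p+q=k,\, p<q}(\Sq^p x)(\Sq^q x)$, with the first term absent when $k$ is odd. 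The paper's one-line proof glosses over this, and neither your chain-level route nor a detour through $U(M)$ can remove these cross terms; they are genuinely part of the relation for the top divided square.
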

\begin{proof}
  Write $\Sq := \Sq^{0} + \Sq^{1} + \cdots$. By the Cartan formula,
  $\Sq \colon R \to R$ is an algebra homomorphism. Since the
  operation $\delta_{i}$ is natural, this means $\delta_{i}\Sq =
  \Sq \delta_{i}$ in $\pi_{*}R$. Considering the homogeneous parts in
  each internal degree on both sides gives the result.
\end{proof}

We first consider the case where $M$ is a simplicial unstable
$A$-module such that $\pi_{0}M = 0$: 
\begin{propn}\label{propn:piUno0}
  Suppose $M$ is a levelwise projective simplicial unstable $A$-module such
  that $\pi_{0}M = 0$. Then there is a natural isomorphism of
  commutative bigraded $\F$-algebras
  \[ \pi_{*}UM \to
  \mathfrak{S}(\pi_{*}\Sigma\Omega M),\] 
  compatible with Steenrod operations and higher divided squares.
\end{propn}
\begin{proof}
  Observe that if $N$ is a simplicial unstable $A$-module such that
  $\pi_{0}N = 0$ and the top squares in $N$ vanish (such as $N =
  \Sigma \Omega M$), then we have an isomorphism
  $\mathfrak{E}(\pi_{*}N) \cong \mathfrak{S}(\pi_{*}N) \otimes
  \mathfrak{S}(\delta_{1}\pi_{*}N)$, compatible with the Steenrod
  operations. Moreover, the inclusion $\mathfrak{S}(\pi_{*}N)
  \hookrightarrow \mathfrak{E}(\pi_{*}N)$ and retraction
  $\mathfrak{E}(\pi_{*}N) \to \mathfrak{S}(\pi_{*}N)$ are compatible
  with Steenrod operations. Taking $N = \Sigma \Omega M$ we thus have
  a natural map of graded algebras
  \[ \pi_{*}UM \to \mathfrak{E}(\pi_{*}\Sigma\Omega M) \to
  \mathfrak{S}(\pi_{*}\Sigma\Omega M),\] compatible with Steenrod
  squares and $\delta$-operations. We will show that this map is an
  isomorphism.

  By Corollary~\ref{cor:RestrDecomp}, $M$ is weakly equivalent to a
  coproduct of $K[n,q]$'s and $K[n,q,k]$'s. We know that $U$ preserves
  weak equivalences and colimits by Proposition~\ref{propn:Ucolim},
  and $\Omega$ preserves coproducts and weak equivalences between
  levelwise projective objects. Thus it suffices to prove the result
  when $M$ is $K[n,q]$ and $K[n,q,k]$.

  In the first case, $\Sigma \Omega K[n,q]$ is $K(\F[n,q])$, and by
  Lemma~\ref{lem:UKnq} we know that $UK[n,q]$ is $SK(\F[n,q])$ for $n
  > 0$, so the
  map $U K[n,q] \to U \Sigma\Omega K[n,q]$ is the natural map $S
  K(\F[n,q]) \to E K(\F[n,q])$. On homotopy this is just the
  inclusion of the factor $\mathfrak{S}(\F[n,q])$.

  When $n = 0$, we have $\Sigma \Omega K[n,0] \simeq 0$, so
  \[\pi_{*}UK[n,0] \to \pi_{*}U\Sigma\Omega K[n,0] \to
  \mathfrak{S}(\pi_{*}\Sigma\Omega K[n,0])\] is the identity map
  on $\F[0]$ by Lemma~\ref{lem:piUKnzero}.

  For $UK[n,q,k]$ we consider the extension sequence $UK[n,q] \to
  UK[n,q,k] \to UK[n+1, 2^{k}q]$. On homotopy, this leads to a
  commutative diagram
 \[ \ltikzcd{%
   \pi_{*}UK[n,q] \arrow{r} \arrow{d} \pgfmatrixnextcell
   \mathfrak{S}(\pi_{*}\Sigma\Omega K[n,q]) \arrow{d}
   \\
      \pi_{*}UK[n,q,k] \arrow{r} \arrow{d} \pgfmatrixnextcell
   \mathfrak{S}(\pi_{*}\Sigma\Omega K[n,q,k]) \arrow{d}
   \\
      \pi_{*}UK[n+1,2^{k}q] \arrow{r} \pgfmatrixnextcell
   \mathfrak{S}(\pi_{*}\Sigma\Omega K[n+1,2^{k}q]).
}
  \]
  Here we have already shown that the top and bottom horizontal
  morphism are isomorphisms. But the chain complex $\Sigma \Omega
  C(q,k)$ is clearly $\F[q] \oplus \Sigma \F[2^{k}q]$, so the right
  vertical maps are a split extension sequence. Moreover, from the
  proof of Corollary~\ref{cor:piUKnqk} we know that the lower left
  vertical map is surjective, and that choosing a preimage of the
  generator gives an isomorphism $\mathfrak{S}(\pi_{*}\Sigma\Omega
  K[n,q]) \otimes \mathfrak{S}(\pi_{*}\Sigma\Omega K[n+1,2^{k}q])
  \isoto \pi_{*}UK[n,q,k]$. The composite of this with the map
  $\pi_{*}UK[n,q,k] \to \mathfrak{S}(\pi_{*}\Sigma\Omega K[n,q,k])$ is
  also an isomorphism (since it is determined by where it sends the
  generators). Thus by the 2-out-of-3 property the middle horizontal
  map here must also be an isomorphism, which completes the proof.
\end{proof}

For a general simplicial unstable $A$-module $M$ we have a projection
$M \to \pi_{0}M[0]$. Writing $M_{>0}$ for the fibre of this map, we
have a pushout square \nolabelcsquare{M_{>0}}{M}{0}{M_{0}} where
$M_{0}$ is weakly equivalent to $\pi_{0}M[0]$ (as can be seen from the
long exact sequence in homotopy groups). Thus we have an pushout
diagram \nolabelcsquare{UM_{>0}}{UM}{\F}{UM_{0}} of simplicial
unstable $A$-algebras. On homotopy we thus have maps $\pi_{*}UM_{>0}
\to \pi_{*}UM \to U\pi_{0}M[0]$, where the second map is an
isomorphism on $\pi_{0}$. We thus have a canonical map $U \pi_{0}M[0]
\to \pi_{*}UM$, and since the tensor product is the coproduct here we
get a map $\pi_{*}UM_{>0}\otimes U\pi_{0}M[0] \to \pi_{*}UM$,
compatible with all the operations in play. Moreover, this is an
isomorphism --- as usual, this follows from considering the case where
$M$ is $K[n,q]$ or $K[n,q,k]$.

\begin{thm}\label{thm:piUftrial}
  Suppose $M$ is a simplicial unstable $A$-module, and let $M' \to M$
  be a weak equivalence where $M'$ is levelwise projective. Then we
  have a natural  isomorphism of commutative bigraded $\F$-algebras
  \[ \mathfrak{S}(\pi_{*>0}\Sigma\Omega M') \otimes U \pi_{0}M[0] \isoto
  \pi_{*}UM,\]
  compatible with Steenrod operations and higher divided
  squares. Moreover, there is a short exact sequence
  \[ 0 \to \Sigma\Omega_{1}\pi_{*>0}M[1] \to \pi_{*>0}\Sigma\Omega M' \to
  \Sigma \Omega \pi_{*>0}M \to 0\]
  of graded unstable $A$-modules.
\end{thm}
\begin{proof}
  Since $U$ preserves weak equivalences by Proposition~\ref{propn:Ucolim}, we have $\pi_{*}UM' \cong
  \pi_{*}UM$, so the desired isomorphism follows from
  Proposition~\ref{propn:piUno0}. The short exact sequence is a
  consequence of the hyperhomology spectral sequence
  \[ \Sigma\Omega_{s}\pi_{t}M \Rightarrow \pi_{s+t}\Sigma\Omega M\]
  (see for example \cite{GrothendieckTohoku}), which collapses.
\end{proof}

\begin{cor}
  Let $X$ be a connected spectrum of finite type. In the infinite
  loops spectral sequence for $X$ the $E_{2}$-term is isomorphic to 
  \[ \mathfrak{S}(\mathbb{L}_{*>0}(\Sigma\Omega D)(\HH^{*}X)) \otimes
  UD(\HH^{*}X),\] compatibly with products, Steenrod operations, and
  higher divided squares. Moreover, there is a short exact sequence
  \[ 0 \to \Sigma\Omega_{1}\mathbb{L}_{* > 0}D(\HH^{*}X)[1] \to
  \mathbb{L}_{*>0}\Sigma\Omega D(\HH^{*}X) \to
  \Sigma \Omega \mathbb{L}_{*>0}D(\HH^{*}X) \to 0\]
  of graded unstable $A$-modules.
\end{cor}
\begin{proof}
  Recall that the $E_{2}$-term is obtained by applying $UD$ to a free simplicial
  resolution of $\HH^{*}X$ as an $A$-module. Applying $D$ to this free
  resolution gives a levelwise free simplicial unstable $A$-module, so
  this is immediate from Theorem~\ref{thm:piUftrial}.
\end{proof}

\section{Examples}\label{sec:Ex}
Corollary~\ref{cor:E2page} reduces the analysis of the $E_{2}$-term of
our spectral sequence to the computation of the derived functors of
$D$. In this section we will apply results about these functors from
the literature to describe the spectral sequence in two simple
examples.

\subsection{Eilenberg-Mac Lane Spectra}
The spectral sequence is clearly trivial for Eilenberg-Mac Lane spectra
of the form $\Sigma^{k}\HH\F$. As a slightly less trivial example we
can consider the Eilenberg-Mac Lane spectra $\Sigma^{k}\HH\ZZ$ and
$\Sigma^{k}\HH\ZZ /2^{n}$, where we must have $k > 0$ for our
convergence result to apply. The mod-2 cohomology of the spectrum
$\HH\ZZ$, originally computed by Serre~\cite{SerreCohlgyEM}, is the
$A$-module $A/\Sq^{1}$. Since $\Sq^{1}\Sq^{1} = 0$, this has a simple
free resolution, namely
\[ \cdots \to \Sigma^{2} A \xto{\cdot \Sq^{1}} \Sigma A \xto{\cdot
  \Sq^{1}} A.\] From this we see that, writing $F(n) = D(\Sigma^{n}A)$
for the free unstable $A$-module on a generator in degree $n$, the
derived functors $\mathbb{L}_{*}D(\HH^{*}\Sigma^{k}\HH\ZZ)$ are given
by the cohomology of the complex
\[ \cdots \to F(k+2) \xto{\cdot \Sq^{1}} F(k+1) \xto{\cdot \Sq^{1}}
F(k).\]
But it is easy to see that this complex is exact for $k > 0$, and so
$\mathbb{L}_{*}D(\HH^{*}\Sigma^{k}\HH\ZZ)$ is 0 for $* > 0$. It follows
that our spectral sequence has only a single column, and so collapses
to give \[\HH^{*}(K(\ZZ,k)) = \HH^{*}(\Omega^{\infty}\Sigma^{k}\HH\ZZ)
\cong U(F(k)/\Sq^{1}).\]

Similarly, the spectrum $\Sigma^{k}\HH\ZZ/2^{n}$ has cohomology
$A/\Sq^{1} \oplus \Sigma A/\Sq^{1}$, so again $D$ has no derived
functors and we get $\HH^{*}(K(\ZZ/2^{n}, k)) \cong U(F(k)/\Sq^{1})
\otimes U(F(k+1)/\Sq^{1})$. Of course, these results agree with
Serre's computations in \cite{SerreCohlgyEM}.

\subsection{Suspension Spectra}
In this subsection we consider the spectral sequence for infinite
loop spaces of the form $\Omega^{\infty}\Sigma^{\infty}X$, where $X$
is a connected space --- we will show, by a dimension-counting
argument, that the spectral sequence collapses in this case. 

The cohomology $\HH^{*}(\Sigma^{\infty}X) \cong \tilde{\HH}^{*}X$ is
an unstable $A$-module, and the derived functors
$\mathbb{L}_{*}(D)(M)$ for an unstable $A$-module $M$ were computed by
Lannes and Zarati~\cite{LannesZaratiDestab}. An alternative
computation (in the dual, homological, case), using a chain complex
originally due to Singer~\cite{SingerLoops2}, has been given by Kuhn
and McCarty~\cite{KuhnMcCarty}, and we will use their formulation of
the result. Before stating this, we must introduce some notation:
\begin{defn}
  Let $M$ be an $A$-module. Let $\mathcal{R}_{s}M$ be the quotient of
  the graded $\F$-vector space generated by symbols $Q^{I}x$ in degree
  $|x| + i_{1} + \cdots + i_{s}$, where $x \in M$ and $I =
  (i_{1},\ldots,i_{s})$, by the instability and Ad\'{e}m relations for
  the Dyer-Lashof algebra as well as linearity relations ($Q^{I}(x+y)
  = Q^{I}x + Q^{I}y$). This becomes an $A$-module via the (dual)
  Nishida relation
  \[ \Sq^{i}Q^{j}x = \sum_{k}\binom{j-k}{i-2k}
  Q^{i+j-k}\Sq^{k}x.\]

  Let $d \colon \mathcal{R}_{s}(\Sigma M) \to \mathcal{R}_{s-1}(M)$ be
  the map defined by $d(Q^{I}\sigma x) =
  Q^{i_{1},\ldots,i_{s-1}}(\Sq^{i_{s}+1}x)$; this is a map of
  $A$-modules. Writing $R_{s}M := \Sigma \mathcal{R}_{s}\Sigma^{s-1}$
  we can think of $d$ as a map $R_{s}M \to R_{s-1}M$.
\end{defn}
The result, in the form given by \cite[Theorems 4.22 and
4.34]{KuhnMcCarty}, is then:
\begin{thm}[Singer, Lannes-Zarati, Kuhn-McCarty]
  Let $M$ be an $A$-module.
  \begin{enumerate}[(i)]
  \item The sequence
    \[ \cdots \to R_{s}M \xto{d} R_{s-1}M \to \cdots \to R_{0}M \] is
    a chain complex, and $\HH_{*}(R_{*}M)$ is naturally isomorphic to
    $\mathbb{L}_{*}(D)(M)$.
  \item If $M$ is an unstable $A$-module, then the differential in
    $R_{*}M$ is zero and thus $\mathbb{L}_{*}D(M) \cong R_{*}M$.
  \item If $M$ is an unstable $A$-module, then $\mathbb{L}_{s}D(M)$ is
    an $s$-fold suspension of an unstable module.
  \end{enumerate}
\end{thm}

By (iii), it follows that for $M$ unstable all the top squares in
$\mathbb{L}_{s}D(M)$ are zero for $s > 0$. By Corollary~\ref{cor:E2page}
we therefore have an isomorphism
\[ \mathbb{L}_{*}(UD)(M) \cong U(M)[0] \otimes
\mathfrak{E}(\mathbb{L}_{*>0}D(M)).\]

Thus if $E$ is a spectrum such that $\HH^{*}E$ is an unstable
$A$-module, in the $E^{2}$-term of our spectral sequence for
$\HH^{*}(\Omega^{\infty}E)$ an element $v \in \HH^{k}E$ gives:
\begin{itemize}
\item $\sigma Q^{I} \sigma^{s-1} v$ in degree $(-s, k + |I| + s)$,
  where $I = (i_{1},\ldots,i_{s})$ is an allowable sequence,
  i.e. $i_{t} \leq 2 i_{t+1}$, and $i_{1} > i_{2} + \cdots + i_{s} +
  |v| + s-1$. For brevity we'll denote this element by $\bar{Q}^{I}v$.
\item For $J$ an admissible sequence of length $l$, 
  $\delta_{J}\bar{Q}^{I}v$ in degree 
  $(-s - |J|, 2^{l}(k + |I| + s))$.
\end{itemize}
The $E_{2}$-page of the spectral sequence is an exterior algebra on
these generators.

Now suppose $X$ is a connected space of finite type; then $\HH^{*}X \cong (\HH_{*}(X))^{\vee}$. In
this case the spectral sequence for $\Sigma^{\infty}X$ converges by
Theorem~\ref{thm:Conv}. We wish to compare the $E_{2}$-page to the
known cohomology $\HH^{*}(Q X) \cong (\HH_{*}(Q X))^{\vee}$. Recall
that the homology $\HH_{*}(QX)$ can be described in terms of the
Dyer-Lashof operations $Q^{j}$:
\begin{thm}[May, \cite{CohenLadaMay}]
  If $X$ is a space, the homology $\HH_{*}(QX)$ is a polynomial
  algebra on generators $Q^{J}v$ where $v$ ranges over a basis of
  $\HH_{*}(X)$, and $J = (j_{1}, \ldots, j_{s})$ is an \emph{allowable}
  sequence, meaning $j_{t} \leq 2 j_{t+1}$ for all $t$, and $j_{1} >
  j_{2} + \ldots + j_{s} + |v|$. The element $Q^{J}v$ is in degree
  $|v| + |J|$.
\end{thm}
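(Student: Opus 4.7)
The plan is to combine the canonical $E_\infty$-structure on $QX = \Omega^\infty \Sigma^\infty X$ with the Snaith splitting to reduce the computation to the extended powers $D_n X := (E\Sigma_n)_+ \wedge_{\Sigma_n} X^{\wedge n}$. The $E_\infty$-structure furnishes Dyer--Lashof operations $Q^j$ on $\HH_*(QX)$ coming from the $\Sigma_2$-equivariant structure map $E\Sigma_2 \times_{\Sigma_2} (QX)^2 \to QX$, so each basis element $v \in \HH_*(X) \subseteq \HH_*(QX)$ gives rise to classes $Q^J v := Q^{j_1}\cdots Q^{j_s} v$. The Adem relations for compositions of $Q^j$'s, the instability relation $Q^j v = 0$ for $j < |v|$, and the Cartan formula for products then reduce the generating set to the admissible monomials satisfying $j_1 > j_2 + \cdots + j_s + |v|$.

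To show these classes generate $\HH_*(QX)$ freely as a polynomial algebra, I would invoke the Snaith splitting: for $X$ connected there is a natural stable equivalence
\[ \Sigma^{\infty}_{+} QX \simeq \bigvee_{n \geq 0} \Sigma^{\infty} D_n X, \]
which gives an additive weight grading on $\HH_*(QX)$ where $v \in \HH_*(X)$ carries weight $1$ and each $Q^j$ doubles weight, mirroring the weight bookkeeping used for $\mathfrak{s}$ in \S\ref{subsec:SymmAlg}. Since products multiply weights additively and the admissible monomial $Q^J v$ has weight $2^s$ when $|J| = s$, both sides decompose by weight and the problem reduces to showing that for each $n$ the admissible monomials of weight $n$ form a basis for $\HH_*(D_n X)$.

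The central computation is therefore $\HH_*(D_{2^k} X)$; mixed weights $n = 2^{k_1} + \cdots + 2^{k_r}$ decompose as tensor products of $2^{k_i}$-primary pieces via the double coset description of $\Sigma_n$ modulo $\Sigma_{2^{k_1}} \times \cdots \times \Sigma_{2^{k_r}}$. For the primary case one could use the Serre spectral sequence of the bundle $X^{2^k} \to D_{2^k} X \to B\Sigma_{2^k}$, or more tractably pull back along the iterated wreath product $B(\Sigma_2 \wr \cdots \wr \Sigma_2) \to B\Sigma_{2^k}$ to iterate the rank-one case $D_2 X$, whose homology is directly computable from the bundle $X^2 \to D_2 X \to B\mathbb{Z}/2 = \mathbb{RP}^\infty$. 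The hard part will be identifying the spectral sequence generators with the operations $Q^J v$ at the chain level --- this requires constructing the $Q^j$ geometrically from the little cubes operad and tracking them through the Serre filtration --- after which a Poincar\'{e} series count against Nakaoka's formula for $\HH_*(B\Sigma_n)$ confirms that the admissible monomials both span and are linearly independent.
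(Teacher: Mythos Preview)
The paper does not prove this theorem; it is quoted from \cite{CohenLadaMay} as background input for the comparison in \S\ref{sec:Ex}. There is therefore no ``paper's own proof'' to compare against --- the authors simply invoke May's result and immediately reparametrize the allowable sequences $J$ by the integers $a_1,\ldots,a_s$ in order to match them with the $E_2$-generators coming from the Lannes--Zarati computation.

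As for your sketch on its own terms: the ingredients are sound, but the logic is somewhat circular as written. The Snaith splitting $\Sigma^\infty_+ QX \simeq \bigvee_n \Sigma^\infty D_n X$ is historically and logically downstream of the homology computation you are trying to establish; the standard route in \cite{CohenLadaMay} goes instead through May's approximation theorem $C_\infty X \xrightarrow{\sim} QX$ for connected $X$ (an unstable equivalence, not a stable splitting), followed by the filtration of the free $E_\infty$-algebra $C_\infty X$ by monadic word length. The associated graded is exactly $\bigvee_n D_n X$, and one then computes $\HH_*(D_n X;\F)$ by restricting to the $2$-Sylow $\Sigma_2 \wr \cdots \wr \Sigma_2 \subset \Sigma_{2^k}$ and iterating the quadratic construction, as you outline. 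Your Poincar\'e-series comparison with Nakaoka's calculation of $\HH_*(B\Sigma_n)$ is also standard, but note that the hard step you flag --- identifying the spectral-sequence generators with the chain-level $Q^j$ --- is precisely the content of May's theory of $\mathcal{C}_\infty$-Hopf algebras and occupies most of the cited reference; it is not a bookkeeping detail.
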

To see that the spectral sequence must collapse, it suffices to prove
the following:

\begin{propn}
  There is a grading-preserving bijection between the exterior algebra
  generators $\bar{Q}^{I}x$, $\delta_{J}\bar{Q}^{I}x$ of the
  $E_{2}$-page and the Dyer-Lashof operations $Q^{K}x$, together with
  their powers $(Q^{K}x)^{2^{r}}$, for each $x$ in a basis for the
  reduced cohomology of $X$.
\end{propn}
\begin{proof}
  Write $k := |x|$. Observe that for any allowable sequence $I$ with
  $i_{1} > i_{2} + \cdots + i_{s} + k + s - 1$, the total degree of
  $\bar{Q}^{I}x$ is the same as the degree of $Q^{I}x$. However, there
  are more non-zero Dyer-Lashof operations on $x$ than those given by
  these sequences: we are missing those where $i_{2} + \cdots + i_{s}
  + k < i_{1} \leq i_{2} + \cdots + i_{s} + |x| + s - 1$. To relate
  these to the $E_{2}$-term, we change the indexing of the Dyer-Lashof
  operations: If $J = (j_{1}, \ldots, j_{s})$ is an allowable
sequence, then for $Q^{J}v$ to be non-zero (and not a square) in
$\HH_{*}(QX)$ we must have, for positive integers $l_{1}, \ldots,
l_{s}$:
\[ j_{i} = k + j_{s} + j_{s-1} + \cdots + j_{i+1} + l_{i}.\] The
allowability condition, expressed in terms of the $l_{i}$'s, says that
$l_{i} \leq l_{i+1}$. Thus there exist non-negative integers $a_{1},
\ldots, a_{s}$ (with $a_{1} > 0$) such that $l_{i+1} = l_{i} +
a_{i+1}$ (and $l_{1} = a_{1}$). In terms of the $a_{i}$'s the element
$Q^{J}x$ has degree
\[ 2^{s}k + \sum_{j = 1}^{s} \sum_{r = 1}^{j} 2^{j-1} a_{r} = 2^{s}k +
\sum_{r = 1}^{s} \left(\sum_{j = r}^{s} 2^{j-1}\right) a_{r} = 2^{s}k
+ \sum_{r=1}^{s} (2^{s} - 2^{r-1})a_{r}.\] Let's write $q^{a_{1},
  \ldots, a_{s}}x$ for the element $Q^{J}x$ with $J$ of this form. We
also extend the notation by writing $q^{0,a_{1},\ldots,a_{s}}x$ for
$(q^{a_{1},\ldots,a_{s}}x)^{2}$, etc.

Defining $\bar{q}^{a_{1},\ldots,a_{s}}x$ similarly, we see that
$\bar{q}^{a_{1},\ldots, a_{s}}x$ has the same degree as
$q^{a_{1}+(s-1),a_{2},\ldots,a_{s}}x$ in $\HH_{*}(QX)$.

Now suppose $\delta_{I}$ is an admissible sequence of
$\delta$-operations of length $l$. Then there exist non-negative
integers $r_{t}$ such that $i_{l} = r_{l} \geq 1$ and $i_{t} =
2i_{t+1} + r_{t}$ for $t < l$; in terms of the $r_{t}$'s the
admissibility criterion says that $r_{1} + \cdots + r_{l} \leq
s$, and $|I| = \sum_{i}(2^{i}-1)r_{i}$. Then the total degree of $\delta_{I}\bar{q}^{a_{1},\ldots,
  a_{s}}x$ is the same as the degree of $q^{K}x$ where \[K = (s-\sum
r_{t}, r_{1},\ldots, r_{2}, \ldots, r_{l-1}, a_{1}+r_{l}-1,
a_{2},\ldots, a_{s}).\]

To see that this gives a bijection between the generators, we describe
its inverse: For $q^{b_{1},\ldots, b_{\sigma}}x$ in $H_{*}(QX)$, let $L$ be the unique integer
with $0 \leq L < \sigma$ such that 
\[ b_{1} + \cdots + b_{L} + L < \sigma \leq b_{1} + \cdots + b_{L+1} + L +
1.\]
Then we define 
\[s := \sigma-L,\]
\[ r_{L} := s - b_{1} - \cdots - b_{L} - L, \]
\[ r_{t} := b_{t+1}, \qquad t = 1,\ldots, L-1,\]
\[ a_{1} := b_{L+1} - r_{L} + 1\]
\[ a_{i} := b_{L+i}, \qquad i = 2, \ldots, s\]
Then 
\[ (b_{1}, \ldots, b_{\sigma}) = (s - L - \sum_{t=1}^{l} r_{t}, r_{1},
\ldots, r_{L-1}, a_{1} +r_{L} - 1, a_{2}, \ldots, a_{s}),\] so
$q^{b_{1},\ldots,b_{\sigma}}$ corresponds to 
$\delta_{I}\bar{q}^{a_{1},\ldots,a_{s}}x$ where $I = (i_{1}, \ldots,
i_{L})$ is the admissible sequence determined by the $r_{t}$'s,
i.e. with $i_{t} := \sum_{j = t}^{L} 2^{j-t} r_{j}$.
\end{proof}

\begin{cor}
  For $X$ a connected space of finite type, the spectral sequence
  \[\mathbb{L}_{*}(UD)(\HH^{*}X) \Rightarrow \HH^{*}(QX)\] collapses at
  the $E_{2}$-page.
\end{cor}

\bibliographystyle{gtart}
%\bibliography{loops}

\begin{thebibliography}{}
\providecommand\bibmarginpar{\leavevmode\marginpar}
\def\urlstyle#1{{\tt #1}}

\bibitem{BousfieldOpsDerFtr}
\textbf{A\,K Bousfield}, \emph{Operations on derived functors of non-additive
  functors} (1967)\ Unpublished.

\bibitem{BousfieldLocSpa}
\textbf{A\,K Bousfield}, \href{http://dx.doi.org/10.1016/0040-9383(79)90018-1}
  {\emph{The localization of spectra with respect to homology}}, Topology 18
  (1979) 257--281

\bibitem{BousfieldHlgySpSeq}
\textbf{A\,K Bousfield}, \href{http://dx.doi.org/10.2307/2374579} {\emph{On the
  homology spectral sequence of a cosimplicial space}}, Amer. J. Math. 109
  (1987) 361--394

\bibitem{BousfieldKanCompl}
\textbf{A\,K Bousfield}, \textbf{D\,M Kan}, \emph{Homotopy limits, completions
  and localizations}, Lecture Notes in Mathematics, Vol. 304, Springer-Verlag,
  Berlin-New York (1972)

\bibitem{CartanPuissDiv}
\textbf{H Cartan}, \emph{Puissances divisées}, Séminaire Henri Cartan 7
  ({1954--1955}) exp. no. 7\
  \url{http://www.numdam.org/item?id=SHC_1954-1955__7_1_A7_0}

\bibitem{CohenLadaMay}
\textbf{F\,R Cohen}, \textbf{T\,J Lada}, \textbf{J\,P May}, \emph{The homology
  of iterated loop spaces}, Lecture Notes in Mathematics, Vol. 533,
  Springer-Verlag, Berlin-New York (1976)

\bibitem{DoldSymmProd}
\textbf{A Dold}, \emph{Homology of symmetric products and other functors of
  complexes}, Ann. of Math. (2) 68 (1958) 54--80

\bibitem{DoldPuppeNichtFtr}
\textbf{A Dold}, \textbf{D Puppe}, \emph{Homologie nicht-additiver {F}unktoren.
  {A}nwendungen}, Ann. Inst. Fourier Grenoble 11 (1961) 201--312

\bibitem{DwyerDivSqSpSeq}
\textbf{W\,G Dwyer}, \href{http://dx.doi.org/10.2307/1998013} {\emph{Higher
  divided squares in second-quadrant spectral sequences}}, Trans. Amer. Math.
  Soc. 260 (1980) 437--447

\bibitem{DwyerHtpyOps}
\textbf{W\,G Dwyer}, \href{http://dx.doi.org/10.2307/1998012} {\emph{Homotopy
  operations for simplicial commutative algebras}}, Trans. Amer. Math. Soc. 260
  (1980) 421--435

\bibitem{GoerssUnstProj}
\textbf{P\,G Goerss}, \href{http://dx.doi.org/10.1112/plms/s3-53.3.539}
  {\emph{Unstable projectives and stable {${\rm Ext}$}: with applications}},
  Proc. London Math. Soc. (3) 53 (1986) 539--561

\bibitem{GoerssAQ}
\textbf{P\,G Goerss}, \emph{On the {A}ndr\'e-{Q}uillen cohomology of
  commutative {${\bf F}_2$}-algebras}, Ast\'erisque  (1990) 169

\bibitem{GoerssLada}
\textbf{P\,G Goerss}, \textbf{T\,J Lada},
  \href{http://dx.doi.org/10.2307/2160555} {\emph{Relations among homotopy
  operations for simplicial commutative algebras}}, Proc. Amer. Math. Soc. 123
  (1995) 2637--2641

\bibitem{GrothendieckTohoku}
\textbf{A Grothendieck}, \emph{Sur quelques points d'alg\`ebre homologique},
  T\^ohoku Math. J. (2) 9 (1957) 119--221

\bibitem{HackneyCosInfLoop}
\textbf{P Hackney}, \href{http://dx.doi.org/10.1016/j.jpaa.2012.10.002}
  {\emph{Operations in the homology spectral sequence of a cosimplicial
  infinite loop space}}, J. Pure Appl. Algebra 217 (2013) 1350--1377

\bibitem{Isaacson}
\textbf{S\,B Isaacson}, \href{http://dx.doi.org/10.1016/j.jpaa.2010.08.001}
  {\emph{Symmetric cubical sets}}, J. Pure Appl. Algebra 215 (2011) 1146--1173
  \xox{arXiv}{0910.4948}

\bibitem{KanFtrsCSS}
\textbf{D\,M Kan}, \emph{Functors involving c.s.s. complexes}, Trans. Amer.
  Math. Soc. 87 (1958) 330--346

\bibitem{KuhnMcCarty}
\textbf{N Kuhn}, \textbf{J McCarty},
  \href{http://dx.doi.org/10.2140/agt.2013.13.687} {\emph{The mod 2 homology of
  infinite loopspaces}}, Algebr. Geom. Topol. 13 (2013) 687--745
  \xox{arXiv}{1109.3694}

\bibitem{LannesZaratiDestab}
\textbf{J Lannes}, \textbf{S Zarati},
  \href{http://dx.doi.org/10.1007/BF01168004} {\emph{Sur les foncteurs
  d\'eriv\'es de la d\'estabilisation}}, Math. Z. 194 (1987) 25--59

\bibitem{MacLaneWorking}
\textbf{S Mac~Lane}, \emph{Categories for the working mathematician}, volume~5
  of \emph{Graduate Texts in Mathematics}, second edition, Springer-Verlag, New
  York (1998)

\bibitem{MillerDeloop}
\textbf{H Miller}, \href{http://projecteuclid.org/euclid.pjm/1102805992}
  {\emph{A spectral sequence for the homology of an infinite delooping}},
  Pacific J. Math. 79 (1978) 139--155

\bibitem{MillerSullivan}
\textbf{H Miller}, \href{http://dx.doi.org/10.2307/2007071} {\emph{The
  {S}ullivan conjecture on maps from classifying spaces}}, Ann. of Math. (2)
  120 (1984) 39--87

\bibitem{MillerSullivanCorr}
\textbf{H Miller}, \href{http://dx.doi.org/10.2307/1971212} {\emph{Correction
  to: ``{T}he {S}ullivan conjecture on maps from classifying spaces''}}, Ann.
  of Math. (2) 121 (1985) 605--609

\bibitem{Powell}
\textbf{G\,M\,L Powell}, \href{http://dx.doi.org/10.1007/s40306-014-0062-3}
  {\emph{On the derived functors of destabilization at odd primes}}, Acta Math.
  Vietnam. 39 (2014) 205--236 \xox{arXiv}{1101.0226}

\bibitem{QuillenHtpclAlg}
\textbf{D\,G Quillen}, \emph{Homotopical algebra}, Lecture Notes in
  Mathematics, No. 43, Springer-Verlag, Berlin-New York (1967)

\bibitem{RezkSimplAlg}
\textbf{C Rezk}, \href{http://dx.doi.org/10.1016/S0166-8641(01)00057-8}
  {\emph{Every homotopy theory of simplicial algebras admits a proper model}},
  Topology Appl. 119 (2002) 65--94

\bibitem{SchwedeShipleyAlgMod}
\textbf{S Schwede}, \textbf{B\,E Shipley},
  \href{http://dx.doi.org/10.1112/S002461150001220X} {\emph{Algebras and
  modules in monoidal model categories}}, Proc. London Math. Soc. (3) 80 (2000)
  491--511

\bibitem{SchwedeShipleyMon}
\textbf{S Schwede}, \textbf{B Shipley},
  \href{http://dx.doi.org/10.2140/agt.2003.3.287} {\emph{Equivalences of
  monoidal model categories}}, Algebr. Geom. Topol. 3 (2003) 287--334

\bibitem{SerreCohlgyEM}
\textbf{J-P Serre}, \emph{Cohomologie modulo {$2$} des complexes
  d'{E}ilenberg-{M}ac{L}ane}, Comment. Math. Helv. 27 (1953) 198--232

\bibitem{SingerLoops2}
\textbf{W\,M Singer}, \href{http://dx.doi.org/10.1016/0022-4049(80)90044-4}
  {\emph{Iterated loop functors and the homology of the {S}teenrod algebra.
  {II}. {A} chain complex for {$\Omega ^{k}_{s}M$}}}, J. Pure Appl. Algebra 16
  (1980) 85--97

\bibitem{SingerNewChCx}
\textbf{W\,M Singer}, \href{http://dx.doi.org/10.1017/S0305004100058746}
  {\emph{A new chain complex for the homology of the {S}teenrod algebra}},
  Math. Proc. Cambridge Philos. Soc. 90 (1981) 279--292

\bibitem{Turner}
\textbf{J\,M Turner}, \href{http://dx.doi.org/10.1007/s002220000096} {\emph{On
  simplicial commutative algebras with vanishing {A}ndr\'e-{Q}uillen
  homology}}, Invent. Math. 142 (2000) 547--558

\end{thebibliography}

\end{document}